\def\intO{\int_{\Omega}}
\def\intOs{\int_{\Omega_s}}
\def\div{\mathrm{div}}
\def\proj{\mathrm{proj}}
\def\Hdiv{H(\div; \Omega)}
\def\Hzdiv{H_0 (\div ; \Omega)}
\def\p{\mathbf{p}}
\def\q{\mathbf{q}}
\def\tp{\tilde{\mathbf{p}}}
\def\Hp{\mathcal{H}_I \p_{\Gamma}}
\def\Hq{\mathcal{H}_I \q_{\Gamma}}
\def\n{\mathbf{n}}
\def\R{\mathbb{R}}
\def\0{\mathbf{0}}
\def\E{\mathcal{E}}
\def\J{\mathcal{J}}
\def\L{\mathcal{L}}
\def\T{\mathcal{T}}
\def\I{\mathcal{I}}
\def\tY{\tilde{Y}}
\def\tC{\tilde{C}}
\def\tp{\tilde{\mathbf{p}}}
\def\tJ{\tilde{\mathcal{J}}}
\def\N{\mathcal{N}}
\def\trid{\mathrm{trid}}
\DeclareMathOperator*{\argmin}{\arg\min}
\title{A Finite Element Approach for the Dual Rudin--Osher--Fatemi Model and Its Nonoverlapping Domain Decomposition Methods\thanks{Submitted to the editors January 16th, 2018.
\funding{The first author's work was supported by NRF grant funded by MSIT (NRF-2017R1A2B4011627),
the second author's work was supported by the Basic Science Research Program through NRF funded by MOE (NRF-2013R1A6A3A04058166),
and the third author's work was supported by NRF grant funded by the Korean Government (NRF-2015-Global Ph.D. Fellowship Program).}}}
\author{Chang-Ock Lee\thanks{Department of Mathematical Sciences, KAIST, Daejeon 34141, Korea
  (\email{colee@kaist.edu}, \email{jongho.park@kaist.ac.kr}).}
\and Eun-Hee Park\thanks{School of General Studies, Kangwon National University, Samcheok 25913, Korea 
  (\email{eh.park@kangwon.ac.kr}).}
\and Jongho Park\footnotemark[2]}
\begin{document}

\maketitle

\begin{abstract}
We consider a finite element discretization for the dual Rudin--Osher--Fatemi model using a Raviart--Thomas basis for $\Hzdiv$.
Since the proposed discretization has splitting property for the energy functional, which is not satisfied for existing finite difference-based discretizations, it is more adequate for designing domain decomposition methods.
In this paper, a primal domain decomposition method is proposed, which resembles the classical Schur complement method for the second order elliptic problems,
and it achieves $O(1/n^2)$ convergence.
A primal-dual domain decomposition method based on the method of Lagrange multipliers on the subdomain interfaces is also considered.
Local problems of the proposed primal-dual domain decomposition method can be solved at a linear convergence rate.
Numerical results for the proposed methods are provided.
\end{abstract}

\begin{keywords}
Total Variation, Raviart--Thomas Elements, Domain Decomposition, Parallel Computation, Image Processing
\end{keywords}

\begin{AMS}
65N30, 65N55, 65Y05, 68U10
\end{AMS}

\section{Introduction}
Nowadays, due to advances in imaging devices, large scale images have become increasingly available, and there has arisen the necessity of parallel algorithms for image processing.
One suitable method for parallel computation is the domain decomposition method~(DDM), for which we solve a problem by splitting its domain into several smaller subdomains and conquering the small problem in each subdomain separately.
We consider the Rudin--Osher--Fatemi (ROF) model~\cite{ROF:1992} as a model problem,
which is a classical and effective model for image denoising:
\begin{equation}
\label{ROF}
\min_{u \in BV(\Omega)} \frac{\alpha}{2}\intO {(u-f)^2 \,dx} + TV(u),
\end{equation}
where $\Omega$ is the rectangular domain of an image, $f \in L^2 (\Omega)$ is an observed noisy image, $\alpha$ is a positive denoising parameter,
and $TV(u)$ is the total variation measure defined by
\begin{equation*}
\label{TV}
TV(u) = \sup \left\{ \intO {u \div \q \,dx} : \q \in (C_0^1 (\Omega))^2 , |\q | \leq 1 \right\}.
\end{equation*}
Here, $| \q| \leq 1$ means that $|\q (x) | \leq 1$ for a.e.\ $x \in \Omega$.
The solution space $BV(\Omega)$ denotes the space of the functions in $L^1 (\Omega)$ with the finite total variation,
which is a Banach space equipped with the norm $\|u \|_{BV(\Omega)} = \| u \|_{L^1 (\Omega)} + |Du|(\Omega)$.
It is well known that the ROF model has an anisotropic diffusion property so that it preserves edges and discontinuities in images~\cite{SC:2003}.

While overlapping DDMs for image restoration were considered in~\cite{FLS:2010,XTW:2010}, nonoverlapping DDMs for the total variation minimization were proposed in \cite{FS:2009, HL:2013}.
But Lee and Nam~\cite{LN:2017} gave a counterexample that an overlapping DDM does not converge to the global minimizer.
In~\cite{LLWY:2016}, Lee~et al.\ suggested DDMs with the primal-dual stitching technique.
In \cite{CTWY:2015, HL:2015, LN:2017}, DDMs based on the dual total variation minimization were proposed.
In particular, Chang~et al.~\cite{CTWY:2015} showed that the overlapping subspace correction methods for the dual ROF model have $O(1/n)$ convergence.

There are several major difficulties in designing DDMs for~\cref{ROF}.
First, the energy functional in~\cref{ROF} is nonsmooth, which makes the design of solvers hard.
In addition, the energy functional is nonseparable in the sense that it cannot be expressed as the sum of the local energy functionals in the subdomains due to the total variation term. 
Finally, the solution space $BV(\Omega)$ allows discontinuities of a solution on the subdomain interfaces, so that it is difficult to design an appropriate interface condition of a solution.
One way to overcome such difficulties is to consider the Fenchel--Rockafellar dual problem as in \cite{CTWY:2015, HL:2015, LN:2017},
which is stated as
\begin{equation}
\label{dual_ROF_old}
\min_{\p \in (C_0^1 (\Omega))^2} \frac{1}{2\alpha} \intO ( \div \p + \alpha f )^2 \,dx \hspace{0.5cm}
\textrm{subject to } |\p| \leq 1.
\end{equation}
Even if it is cumbersome to treat the inequality constraint $|\p| \leq 1$, \cref{dual_ROF_old} is more suitable for DDMs,
since the energy functional is separable and the solution space $(C_0^1 (\Omega))^2$ has some regularity on the subdomain interfaces.
The desired primal solution $u$ is recovered from the dual solution $\p$ of~\cref{dual_ROF_old} by the following relation:
\begin{equation*}
u = f  + \frac{1}{\alpha} \div \p.
\end{equation*}
Faster algorithms for solving \cref{dual_ROF_old} were developed in~\cite{BT:2009, Nesterov:2005}.

In the existing works~\cite{Chambolle:2004, CTWY:2015, HL:2015, LN:2017} for~\cref{dual_ROF_old}, the problems were discretized in the finite difference framework.
Each pixel in an image was treated as a discrete point on a grid, and the dual variable was considered as a vector-valued function on the grid.
The discrete gradient and divergence operators were defined by finite difference approximations of the continuous gradient and divergence operators.
In this paper, we propose a finite element discretization for \cref{dual_ROF_old}, which is more suitable for the DDMs than the existing ones.
Each pixel in an image is treated as a square finite element and the problem \cref{dual_ROF} is discretized by using the conforming lowest order Raviart--Thomas element~\cite{RT:1977}.

Based on the proposed discretization, we propose a primal DDM which is similar to the classical Schur complement method for the second order elliptic problems.
Eliminating the interior degrees of freedom in each subdomain yields an equivalent minimization problem to the full dimension problem.
The functional of the resulting minimization problem has enough regularity to adopt the FISTA~\cite{BT:2009}.
Thus, the proposed primal DDM achieves $O(1/n^2)$ convergence, and to the best of our knowledge, it is the best rate among the existing DDMs for the ROF model.
In addition, we propose a primal-dual DDM based on an equivalent saddle point problem.
The continuity of a solution on the subdomain interfaces is enforced by the method of Lagrange multipliers as in \cite{DCT:2016, FLP:2000, FR:1991}, and it yields an equivalent saddle point problem of the original variable (primal) and the Lagrange multipliers (dual).
The local problems for the proposed primal-dual DDM can be solved at a linear convergence rate, so that the method becomes very fast.

The rest of the paper is organized as follows.
In \cref{Sec:dual_ROF}, a conforming discretization of the dual ROF model with a Raviart--Thomas finite element space is introduced.
A primal DDM based on an equivalent minimization problem on the subdomain interfaces is presented in \cref{Sec:primal_DD}.
A primal-dual DDM based on an equivalent saddle point problem is considered in \cref{Sec:pd_DD}.
We present numerical results for the proposed methods in various settings in \cref{Sec:numerical}.
Finally, we conclude the paper with some remarks in \cref{Sec:conclusion}.

\section{The Dual ROF Model}
\label{Sec:dual_ROF}

\subsection{Preliminaries}
We review some preliminaries about the dual ROF model.
The space $\Hdiv$ is defined as
\begin{equation*}
\Hdiv = \left\{ \p \in (L^2 (\Omega))^2 : \div \p \in L^2 (\Omega) \right\}.
\end{equation*}
It is a Hilbert space equipped with an inner product
\begin{equation*}
\left< \p, \q \right>_{\Hdiv} = \intO \p \cdot \q \,dx + \intO \div \p \div \q \,dx,
\end{equation*}
and its induced norm is called the $\Hdiv$ graph norm.
A remarkable property of $\Hdiv$ is that, for a vector function $\p \in \Hdiv$, the normal component $\p \cdot \n$ on $\partial \Omega$ is well-defined \cite{BBF:2013, GR:2012}.
We define $\Hzdiv$ as the subspace of $\Hdiv$ with vanishing normal component on $\partial \Omega$.
It can be shown that the space $\Hzdiv$ is the closure of $(C_0^{\infty}(\Omega))^2$ in the $\Hdiv$ graph norm~\cite{Monk:2003}.
Thus, it is natural to consider the following alternative formulation of~\cref{dual_ROF_old} using $\Hzdiv$ as the solution space:
\begin{equation}
\label{dual_ROF}
\min_{\p \in \Hzdiv} \left\{ \J(\p) := \frac{1}{2\alpha} \int_{\Omega} (\div \p + \alpha f)^2 \,dx \right\} \hspace{0.5cm}
\textrm{subject to } |\p| \leq 1.
\end{equation}
We notice that this formulation was also considered in \cite{CTWY:2015}.

\subsection{Finite Element Discretizations}
A digital image consists of a number of rows and columns of pixels, holding values representing the intensity at a specific point.
We regard each pixel as a unit square and an image as a piecewise constant function in which each piece is a single pixel.
In this sense, we regard each pixel in a digital image as a square finite element whose side length equals~1.
Let $\T$ be the collection of all elements in $\Omega$, i.e. pixels.
We define the space $X$ for the image by
\begin{equation*}
X  = \left\{ u \ \in L^2 (\Omega) : u|_{T}\textrm{ is constant } \forall T \in \T \right\}.
\end{equation*}
Then it is clear that $X \subset BV(\Omega )$, which means that the discretization is conforming.
Each degree of freedom of $X$ lies in an element (see \cref{Fig:dofs}(a)), and its corresponding basis function is
\begin{equation*}
\phi_T (x) = \begin{cases} 1 & \textrm{ if } x \in T , \\ 0 & \textrm{ if } x \not\in T , \end{cases} \hspace{0.5cm} T \in \T.
\end{equation*}
For $u \in  X$ and $T \in \T$, let $(u)_T$ denote the degree of freedom of $u$ associated with the basis function $\phi_T$.
With a slight abuse of notation, let $\T$ also indicate the set of indices of the basis functions for $X$;
then we can represent $u$ by
$$u = \sum_{T \in \T} (u)_T \phi_T.$$

\begin{figure}[]
\centering
\subfloat[][Degrees of freedom for $X$]{ \includegraphics[height=3.8cm]{./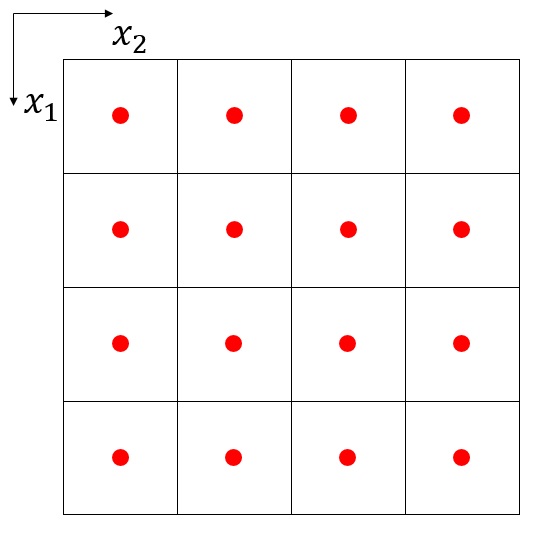} }
\hspace{1.5cm}
\subfloat[][Degrees of freedom for $Y$]{ \includegraphics[height=3.8cm]{./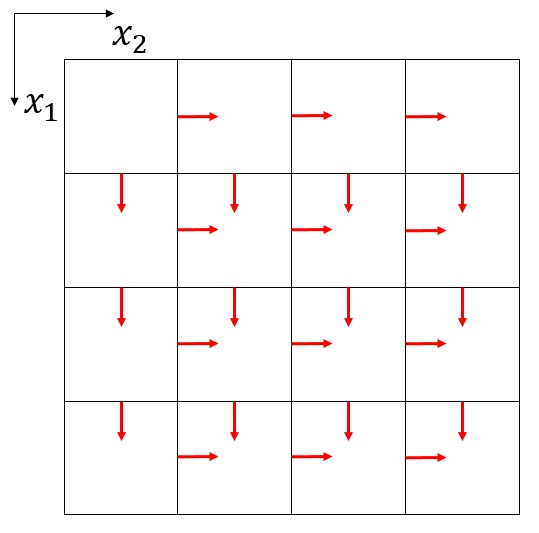} }
\caption{Degrees of freedom for the spaces $X$ and $Y$}
\label{Fig:dofs}
\end{figure} 

It is natural to determine the space $Y$ for the dual variable $\p$ such that the divergence of each element in $Y$ is in $X$.
A suitable choice to meet this condition is the lowest order Raviart--Thomas elements~\cite{RT:1977}.
We define $Y$ by
\begin{equation*}
Y = \left\{ \q \in \Hzdiv : \q|_{T} \in \mathcal{RT}_0(T) \hspace{0.2cm}\forall T \in \T \right\},
\end{equation*}
where $\mathcal{RT}_0(T)$ is the collection of the vector functions $\q$:~$T \rightarrow \R^2$ of the form
$$\q (x_1 , x_2) = \begin{bmatrix} a_1 + b_1 x_1 \\ a_2 + b_2 x_2 \end{bmatrix}.$$
In order for a piecewise $\mathcal{RT}_0(T)$-function to be in $\Hzdiv$, a particular condition on the element interfaces should be satisfied, which is given in the following proposition~\cite{Nedelec:1980}.

\begin{proposition}
\label{Prop:FEM_interface}
A vector function $\q$\emph{:} $\Omega \rightarrow \R^2$ is in $\Hdiv$ if and only if
the restriction of $\q$ to each $T \in \T$ is in $H(\div ; T)$, and
for each common edge $e = \bar{T_1} \cap \bar{T_2}$, we have
$$ \q \cdot \n |_{T_1} + \q \cdot \n |_{T_2} = 0 \textrm{ on } e, $$
where $\n |_{T_i}$ is the outer normal to $\partial T_i$ on $e$, $i= 1,2$, so that $\n|_{T_1} = - \n|_{T_2}$.
\end{proposition}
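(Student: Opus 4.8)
The plan is to argue directly from the definition of the distributional divergence, using the element-wise Green's formula as the main tool. Recall that $\q \in \Hdiv$ means $\q \in (L^2 (\Omega))^2$ together with the existence of some $g \in L^2 (\Omega)$ such that $\intO g \phi \,dx = - \intO \q \cdot \nabla \phi \,dx$ for all $\phi \in C_0^{\infty}(\Omega)$, in which case $\div \q := g$. For a function $\q \in H(\div; T)$ the normal trace $\q \cdot \n$ on $\partial T$ is well-defined as an element of $H^{-1/2}(\partial T)$ by the trace theorem for $H(\div)$ recalled above, and one has the identity
\begin{equation*}
\int_T \div \q \, \phi \,dx = - \int_T \q \cdot \nabla \phi \,dx + \left< \q \cdot \n , \phi \right>_{\partial T} \quad \forall \phi \in H^1(T),
\end{equation*}
where $\left< \cdot , \cdot \right>_{\partial T}$ is the duality pairing between $H^{-1/2}(\partial T)$ and $H^{1/2}(\partial T)$. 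Everything reduces to summing this identity over $T \in \T$ and tracking the boundary terms.

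For the forward implication, I would assume $\q \in \Hdiv$. Its restriction to each $T$ clearly lies in $H(\div; T)$. Fixing $\phi \in C_0^{\infty}(\Omega)$ and summing the Green's formula over $\T$, the left-hand sides add up to $\intO \div \q \, \phi \,dx = - \intO \q \cdot \nabla \phi \,dx$, which matches exactly the sum of the volume terms on the right; hence the boundary terms must cancel, i.e.\ $\sum_{T \in \T} \left< \q \cdot \n , \phi \right>_{\partial T} = 0$. Since $\phi$ has compact support in $\Omega$, the portions of $\partial T$ lying on $\partial \Omega$ contribute nothing, so the remaining sum regroups over the interior edges: each $e = \bar{T_1} \cap \bar{T_2}$ contributes the pairing of $\q \cdot \n|_{T_1} + \q \cdot \n|_{T_2}$ against $\phi|_e$. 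Letting $\phi$ range over $C_0^{\infty}(\Omega)$, whose traces on $e$ are dense in the relevant test space, forces $\q \cdot \n|_{T_1} + \q \cdot \n|_{T_2} = 0$ on $e$.

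For the converse, I would suppose $\q|_T \in H(\div; T)$ for every $T$ and that the normal traces match on every interior edge, and define $g \in L^2 (\Omega)$ piecewise by $g|_T = \div (\q|_T)$. Summing the Green's formula over $\T$ against an arbitrary $\phi \in C_0^{\infty}(\Omega)$, the $\partial \Omega$-boundary terms vanish by compact support and the interior-edge terms cancel pairwise by the matching hypothesis, leaving $- \intO \q \cdot \nabla \phi \,dx = \intO g \phi \,dx$. This says precisely that $g$ is the distributional divergence of $\q$; since $g \in L^2 (\Omega)$, we conclude $\q \in \Hdiv$.

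The delicate point, and the one I would spend the most care on, is the localization step in the forward direction: the normal trace only lives in $H^{-1/2}(\partial T)$, and restricting $H^{1/2}(\partial T)$-functions to a single edge $e$ does not recover all of $H^{1/2}(e)$ but only the smaller space $H^{1/2}_{00}(e)$, so one must test against traces of genuinely global $C_0^{\infty}(\Omega)$ functions and invoke a density argument to deduce the pointwise-type interface identity. In the situation actually used in this paper, where $\q$ is piecewise in $\mathcal{RT}_0(T)$, this obstacle disappears: each normal trace is a constant on each edge, so the interface condition is an elementary identity of constants and no functional-analytic subtlety remains.
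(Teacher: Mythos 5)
Your proof is correct. The paper does not actually prove \cref{Prop:FEM_interface} --- it only cites N\'ed\'elec (1980) --- and your element-wise Green's-formula argument (restrict, sum over $T \in \T$, cancel the volume terms against the global distributional divergence, and localize the interface pairings) is precisely the standard proof found in that and related references, including the correct handling of the $H^{-1/2}(\partial T)$ localization subtlety; as you observe, in the piecewise $\mathcal{RT}_0$ setting actually used in the paper the normal traces are edge-wise constants, so that final density step is not even needed.
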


\Cref{Prop:FEM_interface} gives a natural way to choose the degrees of freedom of the space $Y$.
Let $\q \in Y$.
Then the value of $\q \cdot \n$ is well-defined on each common edge of elements,
where the direction of $\n$ is chosen as in \cref{Fig:dofs}(b).
Therefore, we choose the degrees of freedom of $Y$ by the values of $\q \cdot \n$ on the element interfaces.

To construct the corresponding basis functions, we consider a reference square $T_{\mathrm{ref}} = [0, 1]^2$.
The outer normal component of a basis function $\bm{\psi}_{\mathrm{ref}}$ has the value $1$ on one edge, say $x=1$, and $0$ on the other edges.
Such $\bm{\psi}_{\mathrm{ref}}$ is unique and given by $\bm{\psi}_{\mathrm{ref}}(x_1 , x_2 ) = (x_1 , 0)$.
Similarly, the other basis functions on $T_{\mathrm{ref}}$ are given by $(1-x_1, 0)$, $(0, x_2)$, and $(0, 1-x_2)$.

Now, let $\I$ be the set of indices of the basis functions for $Y$, and let $\left\{\bm{\psi}_i \right\}_{i \in \I}$ be the basis.
Also, for $\p \in Y$ and $i \in \I$, let $(\p)_i$ denote the degree of freedom of $\p$ associated with the basis function $\bm{\psi}_i$; then we can write
$$ \p = \sum_{i \in \I} {(\p)_i \bm{\psi}_i}.$$

Next, we determine the norms and the inner products for which $X$ and $Y$ will be equipped.
In $X$, the $L^2 (\Omega)$-inner product agrees with the Euclidean inner product, so it is natural to choose the inner product as
$$ \left< u, v \right>_X = \int_{\Omega} {uv \,dx} = \sum_{T \in \T} {(u)_T (v)_T} $$
and the norm as its induced norm $$\| u \|_X^2 = \left< u, u \right>_X.$$
We set the inner product for $Y$ by the usual Euclidean inner product
$$ \left< \p, \q \right>_Y = \sum_{i \in \I} {(\p)_i (\q)_i} $$
and the norm by its induced norm $$\| \p \|_Y^2  = \left< \p, \p \right>_Y.$$

\begin{remark}
We equipped $Y$ with not the $(L^2 (\Omega))^2$-inner product but the Euclidean inner product.
The reason is that if we equip $Y$ with the $(L^2 (\Omega))^2$-inner product, then the $(L^2 (\Omega))^2$-mass matrix occurs in the resulting algorithms, making computation more cumbersome.
In the following, we prove that using the Euclidean inner product instead of the $(L^2 (\Omega))^2$-inner product does not affect both the quality of image denoising and the rate of convergence.

Assume that the image size is $n = M \times N$.
Consider an $n \times n$ symmetric tridiagonal matrix~$\trid_n (\alpha , \beta)$ whose diagonal entries are~$\alpha$ and off-diagonal entries are~$\beta$.
Under an appropriate ordering of the degrees of freedom of~$Y$, one can see that the $(L^2 (\Omega))^2$-mass matrix is a block-diagonal matrix composed of~$N$ $\trid_{M-1} (\frac{2}{3}, \frac{1}{6})$-blocks and $M$ $\trid_{N-1} (\frac{2}{3}, \frac{1}{6})$-blocks.
Hence, all eigenvalues are
\begin{equation*}
\frac{2}{3} + \frac{1}{3} \cos \left(\frac{k\pi}{M}\right), \hspace{0.5cm} k=1, \ldots ,M-1,
\end{equation*}
and
\begin{equation*}
\frac{2}{3} + \frac{1}{3} \cos \left(\frac{k\pi}{N}\right), \hspace{0.5cm} k=1, \ldots ,N-1.
\end{equation*}
See section~C.7 in~\cite{LeVeque:2007} for details.
The $(L^2 (\Omega))^2$-mass matrix is spectrally equivalent to the identity matrix which can be obtained from the $(L^2 (\Omega))^2$-mass matrix by diagonal lumping with proper scaling.
Therefore, one can conclude that the overall performance remains the same even if we use the Euclidean inner product.
\end{remark}

For a pixel $T = T_{ij} \in \T$ on the $i$th row and the $j$th column of the $M \times N$ image, let $\iota_{T, 1} \in \I$ be the index corresponding to the degree of freedom of~$Y$ on the edge shared by~$T_{ij}$ and~$T_{i+1,j}$.
Similarly, let $\iota_{T, 2} \in  \I$ be the one on the edge shared by~$T_{ij}$ and $T_{i, j+1}$.
To treat the inequality constraints in~\cref{dual_ROF}, for $1< p < \infty$, we define the subset $C^p$ of $Y$ by
\begin{equation}
\label{Cp}
C^p = \left\{ \p \in Y : |(\p)_{\iota_{T, 1}}|^{q} + |(\p)_{\iota_{T, 2}}|^{q} \leq 1 \hspace{0.2cm} \forall T \in \T \right\},
\end{equation}
where $q$ is the H\"{o}lder conjugate of $p$ and the convention $(\p)_{\iota_{T_{M, j}, 1}} = (\p)_{\iota_{T_{i, N}, 2}} = 0$ is adopted.
Also, for $p=1$, we define
\begin{equation}
\label{C1}
C^1 = \left\{ \p \in Y : |(\p)_i| \leq 1 \hspace{0.2cm} \forall i \in \I \right\}.
\end{equation}
Clearly, for $1 \leq p < \infty$, $C^p$ is nonempty and convex.
The orthogonal projection of $\p \in Y$ onto $C^p$ can be easily computed by
\begin{equation}
\label{proj_Cp}
(\proj_{C^p} \p)_{\iota_{T, k}} = \frac{(\p)_{\iota_{T, k}}}{\left( |(\p)_{\iota_{T, 1}}|^{q} + |(\p)_{\iota_{T, 2}}|^{q} \right)^{\frac{1}{q}}}
\hspace{0.5cm} \forall T \in \T, \hspace{0.1cm} k=1,2
\end{equation}
for $1<p<\infty$ and
\begin{equation}
\label{proj_C1}
(\proj_{C^1} \p)_i = \frac{(\p)_i}{\max \left\{ 1, |(\p)_i| \right\}} \hspace{1cm} \forall i \in \I
\end{equation}
for $p = 1$.

Finally, we are ready to state a finite element version of problem \cref{dual_ROF}:
\begin{equation}
\label{d_dual_ROF}
\min_{\p \in Y} \J(\p) + \chi_{C^p} (\p),
\end{equation}
where $\chi_{C^p}$ is the characteristic function of $C^p$ which is defined as
\begin{equation*}
\chi_{C^p} (\p) = \begin{cases}0 & \textrm{ if } \p \in C^p,\\ \infty & \textrm{ if } \p \not\in C^p. \end{cases}
\end{equation*}
We provide a relation between \cref{d_dual_ROF} and the conventional finite difference discretization of the ROF model.

\begin{figure}[]
\centering
\subfloat[][$\div$]{ \includegraphics[height=3.5cm]{./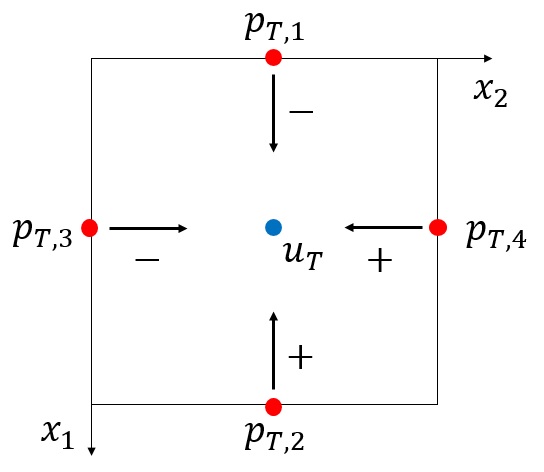} }
\hspace{1.2cm}
\subfloat[][$\div^*$]{ \includegraphics[height=3.5cm]{./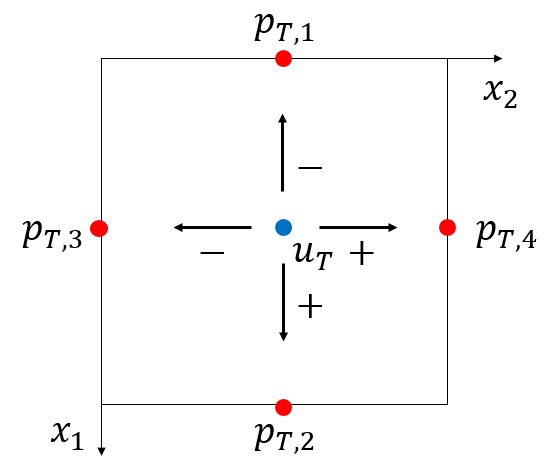} }
\caption{Action of the operators $\div$ and $\div^*$ on an element}
\label{Fig:div}
\end{figure}

\begin{theorem}
\label{Thm:equiv}
Let $\p^* \in Y$ be a solution of~\cref{d_dual_ROF}.
If we identify $X$ with the Euclidean space of the functions from the $M \times N$ discrete points $[1, ..., M] \times [1, ..., N]$ into~$\mathbb{R}$, then $u^* = f + \frac{1}{\alpha} \div \p^*$ is a solution of the finite difference ROF model
\begin{equation*}
\min_{u \in X } \frac{\alpha}{2} \| u - f \|_2^2 + \| | D u|_p \|_1, \hspace{0.5cm} (1 \leq p < \infty)
\end{equation*}
where $D u$ is the forward finite difference operator
\begin{eqnarray*}
(Du)_{ij}^1 &=& \begin{cases} u_{i+1, j} - u_{ij} & \textrm{ if } i=1, ..., M-1, \\ 0 & \textrm{ if } i = M,\end{cases} \\
(Du)_{ij}^2 &=& \begin{cases} u_{i, j+1} - u_{ij} & \textrm{ if } j=1, ..., N-1, \\ 0 & \textrm{ if } j = N\end{cases}
\end{eqnarray*}
and $(|Du|_p)_{ij} = \left( |(Du)_{ij}^1|^p + |(Du)_{ij}^2|^p \right)^{\frac{1}{p}}$.
\end{theorem}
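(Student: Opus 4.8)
The strategy is to recognize \cref{d_dual_ROF} as the Fenchel--Rockafellar dual of the finite difference ROF model and to read off the recovery formula $u^* = f + \frac{1}{\alpha}\div\p^*$ from the associated extremality relations; concretely, I would verify the subdifferential optimality conditions of the two problems and match them.

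First I would fix the dictionary between the finite element and finite difference quantities. Write the finite difference ROF functional as $\Phi(u) = G(u) + H(Du)$, where $G(u) = \frac{\alpha}{2}\|u-f\|_2^2$ and $H(w) = \sum_{ij} |w_{ij}|_p$ with $w_{ij} = (w_{ij}^1, w_{ij}^2)$; note that $\Phi$ is strongly convex and hence has a unique minimizer. Identify the two forward difference arrays with the degrees of freedom of $Y$ via $(Du)_{ij}^k \leftrightarrow (\p)_{\iota_{T_{ij}, k}}$ for $k=1,2$; under this identification the homogeneous conditions $(Du)_{Mj}^1 = (Du)_{iN}^2 = 0$ match the convention $(\p)_{\iota_{T_{M,j},1}} = (\p)_{\iota_{T_{i,N},2}} = 0$, which itself encodes $\p \in \Hzdiv$. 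Using the explicit $\mathcal{RT}_0$ basis functions together with the orientation fixed in \cref{Fig:dofs}(b) and \cref{Fig:div}, one checks that $\div \colon Y \to X$ is, up to sign, the adjoint of $D$, that is, $\div = -D^*$ and hence $\div^* = -D$, with $X$ and $Y$ carrying their Euclidean inner products.

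Next I would record the relevant conjugates and subdifferentials. Since $G$ is smooth, $\partial G(u) = \{\alpha(u-f)\}$. Since $H$ is finite-valued, convex, and splits over pixels into norms, its conjugate is $H^* = \chi_C$ with $C = \{\q : |\q_{ij}|_q \le 1 \ \forall i,j\}$, where $q$ is the H\"older conjugate of $p$; under the dictionary above, $C$ is precisely the set $C^p$ of \cref{Cp}--\cref{C1}. By the Fenchel--Young equality this yields the key equivalence
\begin{equation*}
\q \in \partial H(w) \iff w \in \partial \chi_{C^p}(\q) = N_{C^p}(\q),
\end{equation*}
where $N_{C^p}$ denotes the normal cone of $C^p$.

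Finally I would combine the optimality conditions. Since $\J$ is smooth with $\nabla\J(\p) = \frac{1}{\alpha}\div^*(\div\p + \alpha f)$ and $\chi_{C^p}$ is convex and lower semicontinuous, $\p^*$ solves \cref{d_dual_ROF} if and only if $-\nabla\J(\p^*) \in N_{C^p}(\p^*)$. Putting $u^* = f + \frac{1}{\alpha}\div\p^*$ gives $\div\p^* + \alpha f = \alpha u^*$, hence $-\nabla\J(\p^*) = -\div^* u^* = D u^*$, so the optimality condition becomes $Du^* \in N_{C^p}(\p^*)$, equivalently $\p^* \in \partial H(Du^*)$. Moreover $\alpha(u^* - f) = \div\p^* = -D^*\p^*$, so $0 = \alpha(u^*-f) + D^*\p^* \in \partial G(u^*) + D^*\partial H(Du^*)$. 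Because $G$ and $H$ are everywhere finite, the chain rule $\partial(H\circ D) = D^*\partial H(D\,\cdot)$ and the sum rule for subdifferentials apply, so $0 \in \partial\Phi(u^*)$ and $u^*$ minimizes $\Phi$, which is the claim. The only nonroutine part is the bookkeeping in the first step---matching the finite element divergence operator, the constraint set, and the boundary/orientation conventions with their finite difference counterparts; once that dictionary is established, the remainder is a standard application of convex duality.
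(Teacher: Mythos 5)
Your proof is correct and rests on exactly the same Fenchel--Rockafellar duality and the same dictionary as the paper's proof --- the identification $\div^* = -D$ (hence $\div = -D^*$) and the recognition of $C^p$ as the dual unit ball of the pixelwise $\ell^p$ norm, which is the content of the paper's computation of $\sup_{\p \in C^p}\left<\div^* u, \p\right>_Y = \||Du|_p\|_1$. The only difference is presentational: where the paper invokes ``primal--dual equivalence'' to assert that $u^* = f + \frac{1}{\alpha}\div\p^*$ solves the bidual and then identifies that bidual with the finite difference ROF model, you verify the recovery formula directly by matching the subdifferential optimality conditions $Du^* \in N_{C^p}(\p^*) \Leftrightarrow \p^* \in \partial H(Du^*)$ and concluding $0 \in \partial\Phi(u^*)$, which makes the cited step explicit but does not change the argument.
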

\begin{proof}
By the primal-dual equivalence, $u^*$ is a solution of the Fenchel--Rockafellar dual of~\cref{d_dual_ROF} given by
\begin{equation*}
\min_{u \in X} \left\{ \frac{\alpha}{2} \intO (u-f)^2 \,dx + \sup_{\p \in C^p} \intO u \div \p \,dx \right\}.
\end{equation*}
Then, we have
\begin{align*}
 \frac{\alpha}{2} \intO (u-f)^2 \,dx + \sup_{\p \in C^p} \intO u \div \p \,dx &= \frac{\alpha}{2} \| u - f \|_2^2 + \sup_{\p \in C^p} \left< u , \div \p \right>_X \\
&= \frac{\alpha}{2} \| u - f \|_2^2 + \sup_{\p \in C^p} \left< \div^* u , \p \right>_Y,
\end{align*}
where $\div^*$:~$X \rightarrow Y$ is defined as
\begin{equation*}
\left< \div^* u , \p \right>_Y = \left< u , \div \p \right>_X \hspace{0.5cm} \forall u \in X, \p \in Y.
\end{equation*}
Observe that the $\div^*$ operator acts like the minus finite difference operator (see \cref{Fig:div}(b)).
Indeed, we can see that
\begin{eqnarray*}
(\div^* u)_{\iota_{T, 1}} &=& u_{ij} - u_{i+1, j} = - (Du)_{ij}^1\\
(\div^* u)_{\iota_{T, 2}} &=& u_{ij} - u_{i, j+1} = - (Du)_{ij}^2
\end{eqnarray*}
for $T = T_{ij} \in \T$ with the convention $u_{Mj} - u_{M+1, j} = u_{iN} - u_{i,N+1} = 0$.
Assume $1<p<\infty$, $\frac{1}{p} + \frac{1}{q} = 1$, and take any $\p \in C^p$.
Then, by the duality between the spaces $l^p$ and $l^q$ in each pixel, we get
\begin{align*}
\sup_{\p \in C^p} \left< \div^* u , \p \right>_Y &= \sum_{T = T_{ij} \in \T} \sup_{|(\p)_{\iota_{T, 1}}|^{q}  + |(\p)_{\iota_{T, 2}}|^{q} \leq 1} \left[ (Du)_{ij}^1 (\p)_{\iota_{T, 1}} + (Du)_{ij}^2 (\p)_{\iota_{T, 2}} \right] \\
&= \sum_{T = T_{ij} \in \T} \left[ |(Du)_{ij}^1|^p + |(Du)_{ij}^2|^p \right]^{\frac{1}{p}} = \| |Du|_p \|_1,
\end{align*}
which concludes the proof.
The case for $p=1$ is straightforward.
\end{proof}

\Cref{Thm:equiv} means that, by choosing the set $C^p$ appropriately, the finite element model~\cref{d_dual_ROF} can express various versions of discrete total variation, for example, an anisotropic one for $p=1$ and an isotropic one for $p=2$.
Hereafter, for the sake of simplicity, we treat the case for $p=1$ only; generalization to the other cases is straightforward.
We drop the superscript and write $C = C^1$.

Next, note that the divergence operator in the continuous setting is well-defined on $Y$, and its image is contained in $X$.
That is, the divergence of a function in $Y$ is piecewise constant.
This means that we do not need to define a discrete divergence operator as in the preceding researches,
and some good properties from the continuous setting are inheritable to our discretization.
For instance, for a nonoverlapping domain decomposition $\left\{ \Omega_s \right\}_{s=1}^{\N}$ of $\Omega$ and $\p \in Y$,
the following \textit{splitting property} of $\J(\p)$ holds:
\begin{equation}
\label{splitting}
\frac{1}{2\alpha} \intO (\div \p + \alpha f)^2 \,dx = \sum_{s=1}^{\N} \frac{1}{2\alpha} \intOs ( \div(\p|_{\Omega_s}) + \alpha f) ^2 \,dx .
\end{equation}
\Cref{splitting} will be our main tool in designing the DDMs in \cref{Sec:primal_DD,Sec:pd_DD}.

\begin{remark}
The discrete divergence operator proposed in \cite{Chambolle:2004, LN:2017} does not satisfy~\cref{splitting}, which was designed in the finite difference framework.
\end{remark}

\subsection{Solvers for the Finite Element ROF Model}
The proposed discrete problem~\cref{d_dual_ROF} can adopt the existing solvers for the total variation minimization
using either dual approaches~\cite{BT:2009, Chambolle:2004} or primal-dual approaches~\cite{CP:2011}.
We give some results about~\cref{d_dual_ROF} which help to set the parameters for the solvers.

\begin{proposition}
\label{Prop:div_norm}
The operator norm of $\div$\emph{:} $Y \rightarrow X$ has a bound such that $\| \div \|_{Y \rightarrow X}^2 \leq 8$.
\end{proposition}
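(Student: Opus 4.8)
The plan is to bound $\|\div \p\|_X^2$ by $8\|\p\|_Y^2$ directly, exploiting the explicit action of the divergence operator on the Raviart--Thomas basis. First I would compute $(\div \p)_T$ for a pixel $T = T_{ij}$. Since each basis function $\bm{\psi}_i$ restricted to an element is of the form $(a_1 + b_1 x_1, a_2 + b_2 x_2)$ with $\div$ equal to the constant $b_1 + b_2$, and the four reference basis functions $(x_1,0)$, $(1-x_1,0)$, $(0,x_2)$, $(0,1-x_2)$ have divergences $1, -1, 1, -1$ respectively, one sees that $(\div \p)_T$ is a signed sum of the four interface degrees of freedom adjacent to $T$. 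Concretely, with the normal orientation fixed as in \cref{Fig:dofs}(b) and the sign conventions of \cref{Prop:FEM_interface}, $(\div \p)_{T_{ij}} = (\p)_{\iota_{T_{ij},1}} - (\p)_{\iota_{T_{i-1,j},1}} + (\p)_{\iota_{T_{ij},2}} - (\p)_{\iota_{T_{i,j-1},2}}$, i.e. the difference of the "outgoing minus incoming" normal fluxes across the four edges of $T$ (with the boundary convention that the flux through an edge on $\partial\Omega$ vanishes, consistent with $Y \subset \Hzdiv$). This is exactly the picture in \cref{Fig:div}(a), and it identifies $\div$ with (minus) the adjoint of the forward difference operator $D$ from \cref{Thm:equiv}.

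Next I would estimate $\|\div\|^2$ via $\|\div^* u\|_Y^2$, which is equivalent since $\|\div\|_{Y\to X} = \|\div^*\|_{X\to Y}$. Using the formulas $(\div^* u)_{\iota_{T,1}} = u_{ij} - u_{i+1,j}$ and $(\div^* u)_{\iota_{T,2}} = u_{ij} - u_{i,j+1}$ established in \cref{Thm:equiv}, we get
\begin{equation*}
\|\div^* u\|_Y^2 = \sum_{i,j} (u_{ij} - u_{i+1,j})^2 + \sum_{i,j} (u_{ij} - u_{i,j+1})^2,
\end{equation*}
with the usual boundary conventions. Applying $(a-b)^2 \leq 2a^2 + 2b^2$ to each term and counting that each $u_{ij}$ appears in at most two horizontal-difference terms and two vertical-difference terms, one bounds this by $4\|u\|_X^2 + 4\|u\|_X^2 = 8\|u\|_X^2$. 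This yields $\|\div^*\|_{X\to Y}^2 \leq 8$, hence $\|\div\|_{Y\to X}^2 \leq 8$.

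The only mild subtlety — and the place to be careful rather than the "hard part" — is the bookkeeping of the boundary conventions: edges lying on $\partial\Omega$ carry no degree of freedom, so the corresponding difference terms are simply dropped, which only helps the inequality. Alternatively, one can phrase the whole argument via the well-known discrete gradient/divergence pair on a Cartesian grid, for which the bound $\|\div\|^2 \leq 8$ (more precisely $<8$) is classical; see \cite{Chambolle:2004}. I would present the self-contained counting argument above since the action of $\div$ on $Y$ has already been made explicit. A slightly sharper constant is available by a discrete Fourier analysis of the $(1,-2,1)$-type Laplacian that appears in $\div\,\div^*$, but $8$ suffices for setting solver parameters, so I would not pursue it.
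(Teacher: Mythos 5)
Your proof is correct. It is essentially the paper's argument transposed to the adjoint: the paper bounds $\| \div \p \|_X^2$ directly, writing $(\div \p)_T$ as a signed sum of the four edge degrees of freedom of $T$, applying the four-term Cauchy--Schwarz inequality to get a factor $4$, and then using that each edge is shared by at most two elements to get the factor $8$; you instead bound $\| \div^* u \|_Y^2$, applying the two-term inequality $(a-b)^2 \leq 2a^2 + 2b^2$ and using that each pixel value enters at most four difference terms. The two counting arguments are dual to one another ($4 \times 2$ versus $2 \times 4$) and yield the same constant, and your reliance on the explicit formulas for $\div^*$ is legitimate since they are already established in the proof of \cref{Thm:equiv}. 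The only overhead in your route is the (trivial) identity $\| \div \|_{Y \rightarrow X} = \| \div^* \|_{X \rightarrow Y}$; the paper's direct version avoids this and also carries over verbatim, with an improved constant, to the restricted operator in \cref{Lem:primal_DD_div_norm}, which is presumably why the authors chose it. Your closing remarks (boundary edges only help the inequality; a sharper constant is available by Fourier analysis but unnecessary) are accurate, and the paper itself points to Theorem~3.1 of the reference you cite as the classical source of the bound.
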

\begin{proof}
Fix $\p \in Y$.
For a pixel $T \in \T$, let $p_{T, 1}$, $p_{T, 2}$, $p_{T, 3}$, and $p_{T, 4}$ be the degrees of freedom of $\p$ on the top, bottom, left, and right edges of $T$, respectively (see \cref{Fig:div}).
We may set $p_{T, j}$ by $0$ if it is on $\partial \Omega$ for some $j$.
Then, we have
\begin{align*}
(\div \p)_T^2 &= (-p_{T, 1} + p_{T, 2} - p_{T, 3} + p_{T,4})^2\\
&\leq 4(p_{T, 1}^2 + p_{T, 2}^2 + p_{T, 3}^2 + p_{T,4}^2 ).
\end{align*}
Summation over all $T \in \T$ yields
\begin{align*}
\| \div \p \|_X^2 = \sum_{T \in \T} {(\div \p)_T^2} &\leq 4 \sum_{T \in \T} {(p_{T, 1}^2 + p_{T, 2}^2 + p_{T, 3}^2 + p_{T,4}^2 )}\\
&\leq 8 \sum_{i \in \I} {(\p)_i^2} = 8 \| \p \|_Y^2. 
\end{align*}
For the second inequality, the fact that every edge is shared by at most two elements is used.
Therefore, $\| \div \|_{Y \rightarrow X}^2 \leq 8$.
\end{proof}

\begin{proposition}
\label{Prop:d_dual_ROF_Lipschitz}
The gradient of $\J (\p)$ is given by
$$\nabla \J (\p) = \frac{1}{\alpha} \div^* ( \div\p + \alpha f)$$
and it is Lipschitz continuous with a Lipschitz constant $8/\alpha$.
\end{proposition}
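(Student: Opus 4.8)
The statement has two parts: the formula for $\nabla \J(\p)$ and its Lipschitz continuity. The plan is to handle the gradient formula by direct differentiation of the quadratic functional $\J(\p) = \frac{1}{2\alpha} \int_{\Omega} (\div \p + \alpha f)^2\,dx$, and then to bound the Lipschitz constant using the operator norm estimate from \Cref{Prop:div_norm}.

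First I would compute the gradient. Since $\J$ is a composition of the affine map $\p \mapsto \div \p + \alpha f$ from $Y$ to $X$ with the quadratic $v \mapsto \frac{1}{2\alpha}\|v\|_X^2$, the chain rule gives, for any $\q \in Y$,
\begin{equation*}
\left< \nabla \J(\p), \q \right>_Y = \frac{1}{\alpha} \left< \div \p + \alpha f, \div \q \right>_X.
\end{equation*}
By the definition of the adjoint $\div^*\colon X \to Y$ introduced in the proof of \Cref{Thm:equiv}, the right-hand side equals $\frac{1}{\alpha}\left< \div^*(\div \p + \alpha f), \q \right>_Y$. Since $\q \in Y$ is arbitrary, this yields $\nabla \J(\p) = \frac{1}{\alpha} \div^*(\div \p + \alpha f)$, which is the claimed formula. (One should note here that $\div^*$ is well-defined precisely because $\div$ maps $Y$ into $X$, as established in the discussion preceding \cref{splitting}.)

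Next I would establish the Lipschitz estimate. For $\p_1, \p_2 \in Y$, linearity of $\div$ and $\div^*$ gives
\begin{equation*}
\nabla \J(\p_1) - \nabla \J(\p_2) = \frac{1}{\alpha} \div^* \div (\p_1 - \p_2),
\end{equation*}
so it suffices to bound $\|\div^* \div\|_{Y \to Y}$. Since $\div^*$ is the adjoint of $\div$ with respect to the chosen inner products, one has $\|\div^*\|_{X \to Y} = \|\div\|_{Y \to X}$, hence $\|\div^* \div\|_{Y \to Y} \leq \|\div^*\|_{X\to Y}\,\|\div\|_{Y\to X} = \|\div\|_{Y\to X}^2 \leq 8$ by \Cref{Prop:div_norm}. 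Therefore $\|\nabla \J(\p_1) - \nabla \J(\p_2)\|_Y \leq \frac{8}{\alpha}\|\p_1 - \p_2\|_Y$, giving the Lipschitz constant $8/\alpha$.

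I do not anticipate a serious obstacle here; the argument is essentially bookkeeping once one has \Cref{Prop:div_norm} in hand. The one point requiring a little care is the identity $\|\div^*\|_{X \to Y} = \|\div\|_{Y \to X}$: this is the standard fact that a bounded linear operator between Hilbert spaces and its adjoint have equal operator norms, but it relies on $Y$ being equipped with the Euclidean (hence genuinely inner-product) structure fixed earlier, so that $\div^*$ is the true Hilbert-space adjoint rather than some other transpose. Everything else is routine.
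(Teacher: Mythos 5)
Your proposal is correct and follows essentially the same route as the paper: the gradient formula comes from expanding the quadratic (the paper does this by a direct $O(h^2)$ remainder estimate, you by the chain rule, which is equivalent), and the Lipschitz bound comes from $\|\div^*\div\|_{Y\to Y}\le\|\div\|_{Y\to X}^2\le 8$ via \cref{Prop:div_norm}. Your explicit remark that $\|\div^*\|_{X\to Y}=\|\div\|_{Y\to X}$ only makes precise a step the paper uses implicitly.
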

\begin{proof}
Take any $\p \in Y$, and let $\q \in Y$ with $\|\q \|_Y = 1$, and $h > 0$.
Then we have
\begin{align*}
\left| \J (\p + h\q) - \J(\p) - \left< \frac{1}{\alpha} \div^* ( \div\p  + \alpha f ), h\q \right>_Y \right|
&= \frac{h^2}{2\alpha} \int_{\Omega} (\div \q)^2 \,dx \\
&\leq \frac{h^2}{2\alpha} \| \div \|_{Y \rightarrow X}^2 \| \q \|_Y^2 \leq \frac{4h^2}{\alpha} .
\end{align*}
Therefore, $\nabla \J (\p) = \frac{1}{\alpha} \div^* (\div\p + \alpha f )$.
Furthermore, for any $\p$, $\q \in Y$,
\begin{align*}
\left\| \nabla \J (\p) - \nabla \J (\q) \right\|_Y &= \left\| \frac{1}{\alpha} \div^* \left( \div (\p - \q) \right) \right\|_Y \\
&\leq \frac{1}{\alpha} \| \div \|_{Y \rightarrow X}^2 \| \p - \q \|_Y \leq \frac{8}{\alpha} \| \p - \q \|_Y.
\end{align*}
In the last line, we used \cref{Prop:div_norm} to bound $\| \div \|_{Y \rightarrow X}$.
From the above computations, we conclude that $\nabla \J$ is Lipschitz continuous with a Lipschitz constant $8/\alpha$.
\end{proof}

We notice that the proof of \cref{Prop:div_norm} given here is essentially the same as the proof of Theorem~3.1 of~\cite{Chambolle:2004}.

\section{A Primal Domain Decomposition Method}
\label{Sec:primal_DD}
In this section, we propose a primal DDM for the proposed discretization which resembles the Schur complement method,
one of the most primitive nonoverlapping DDMs for second order elliptic problems.
We note that the method proposed in this section is not a DDM for the ``primal" total variation minimization problem,
but a ``primal" DDM for the ``dual" total variation minimization problem.
In the Schur complement method for second order elliptic problems, the degrees of freedom in the interior of the subdomains are eliminated so that only the degrees of freedom on the subdomain interfaces remain.
The remaining system on the subdomain interfaces is called the Schur complement system, and it is solved by an iterative solver like the conjugate gradient method.
Similarly, in the proposed method, the interior degrees of freedom are eliminated and we solve a resulting minimization problem on the subdomain interfaces.
Every finite-dimensional Hilbert space~$H$ appearing in \cref{Sec:primal_DD,Sec:pd_DD} is equipped with the Euclidean inner product~$\left< \cdot , \cdot \right>_H$ and the induced norm~$\| \cdot \|_H$.

\begin{figure}[]
\centering
\subfloat[][Primal DD]{ \includegraphics[height=3.8cm]{./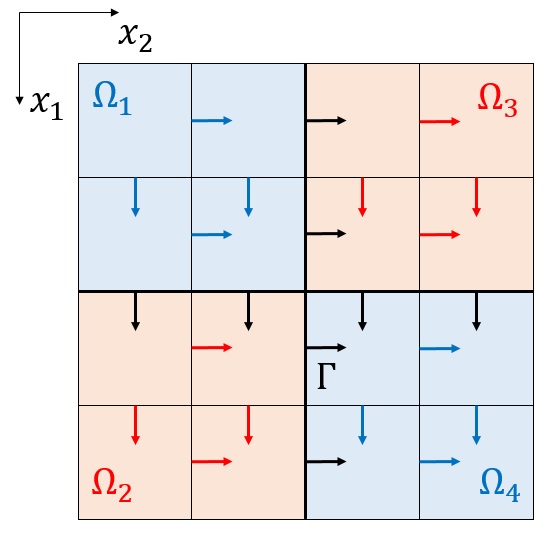} }
\hspace{2cm}
\subfloat[][Primal-dual DD]{ \includegraphics[height=4cm]{./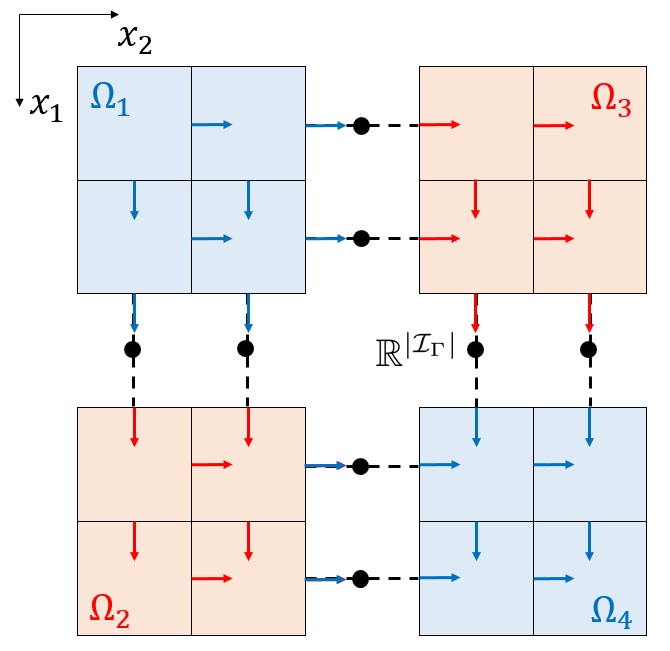} }
\caption{Primal and primal-dual domain decomposition}
\label{Fig:DD}
\end{figure}

We decompose the image domain $\Omega$ into $\N = N \times N$ disjoint square subdomains $\left\{ \Omega_s \right\}_{s=1}^{\N}$ in a checkerboard fashion (see \cref{Fig:DD}(a)).
From now on, the letters $s$ and $t$ stand for indices of subdomains, that is, $s$ and $t$ run from $1$ to $\N$.
We denote the outer normal to $\partial \Omega_s$ by $\n_s$.
For two adjacent subdomains $\Omega_s$ and $\Omega_t$ with $s < t$, let $\Gamma_{st} = \partial \Omega_s \cap \partial \Omega_t$ be the subdomain interface between them.
The subdomain interface $\Gamma_{st}$ is oriented in the way that the normal $\n_{st}$ to $\Gamma_{st}$ is given by $\n_{st} = \n_s = -\n_t$.
Also, we define the union of the subdomain interfaces $\Gamma$ by $\Gamma = \bigcup_{s<t} \Gamma_{st}$.

For the discrete setting, let $\T_s$ be the collection of all elements in $\Omega_s$.
We define the local dual function space $Y_s$ by
\begin{equation}
\label{Y_s}
Y_s = \left\{ \q_s \in  H_0(\div ; \Omega_s) : \q_s |_{T} \in \mathcal{RT}_0 (T) \hspace{0.2cm} \forall T \in \T_s  \right\}.
\end{equation}
Also, let~$\I_s$ be the set of indices of the basis functions for $Y_s$.
In addition, we set $Y_I$ by the direct sum of all local dual function spaces, that is,
\begin{equation*}
Y_I = \bigoplus_{s=1}^{\N} Y_s.
\end{equation*}
One can observe that, for $\p_I = \bigoplus_{s=1}^{\N} \p_s$ and $\q_I = \bigoplus_{s=1}^{\N} \q_s$, we have
\begin{equation*}
\left< \p_I , \q_I \right>_{Y_I} = \sum_{s=1}^{\N} \left< \p_s , \q_s \right>_{Y_s}.
\end{equation*}
Next, we denote $\I_{\Gamma}$ by the set of indices of degrees of freedom of $Y$ on $\Gamma$,
and define the interface function space $Y_{\Gamma}$ by
\begin{equation*}
Y_{\Gamma} = \mathrm{span} \left\{ \bm{\psi}_i \right\}_{i \in \I_{\Gamma}}.
\end{equation*}
The interface function space~$Y_\Gamma$ is equipped with the inner product defined by
\begin{equation*}
\left< \p_{\Gamma} , \q_{\Gamma} \right>_{Y_{\Gamma}} = \left< \p_{\Gamma} , \q_{\Gamma} \right>_Y
\end{equation*}
and its induced norm
\begin{equation*}
\| \p_{\Gamma} \|_{Y_{\Gamma}}^2 = \left< \p_{\Gamma} , \p_{\Gamma} \right>_{Y_{\Gamma}}.
\end{equation*}

As we readily see, $Y = Y_I  \oplus Y_{\Gamma}$.
For $\p \in Y$, there exists a unique decomposition
$$\p = \p_I \oplus \p_{\Gamma} = \left( \bigoplus_{s=1}^{\N} \p_s  \right) \oplus \p_{\Gamma}$$
with $\p_s \in Y_s$ and $\p_{\Gamma} \in Y_{\Gamma}$.
Thanks to the splitting property \cref{splitting}, we have
\begin{equation} \begin{split}
\label{primal_DD_splitting}
\J(\p) &= \frac{1}{2\alpha} \intO (\div \p + \alpha f)^2 \,dx \\
&= \sum_{s=1}^{\N} \frac{1}{2\alpha} \intOs (\div(\p_s + \p_{\Gamma}|_{\Omega_s}) + \alpha f)^2 \,dx.
\end{split} \end{equation}

To treat the inequality constraints, as we did in~\cref{C1}, we define the subset $C_s$ of $Y_s$ by
\begin{equation}
\label{C_s}
C_s = \left\{ \p_s \in Y_s : |(\p_s)_i| \leq 1 \hspace{0.2cm} \forall i \in \I_s \right\},
\end{equation}
and we set $C_I$ as the direct sum of all $C_s$'s:
\begin{equation*}
\label{C_I}
C_I = \bigoplus_{s=1}^{\N} C_s.
\end{equation*}
In addition, let $C_{\Gamma}$ be the subset of $Y_{\Gamma}$ satisfying the inequality constraints:
\begin{equation*}
\label{C_Gamma}
C_{\Gamma} = \left\{ \p_{\Gamma} \in Y_{\Gamma} : |(\p_{\Gamma})_i| \leq 1\hspace{0.2cm}  \forall i \in \I_{\Gamma} \right\}.
\end{equation*}
Similarly to~\cref{proj_C1}, the projections onto $C_s$ and $C_{\Gamma}$ can be computed by the pointwise Euclidean projection:
\begin{subequations}
\begin{equation}
\label{proj_C_s}
(\proj_{C_s} \p_s )_i = \frac{(\p_s)_i}{\max \left\{ 1, |(\p_s)_i| \right\}} \hspace{0.5cm} \forall i \in \I_s,
\end{equation}
\begin{equation}
(\proj_{C_{\Gamma}} \p_{\Gamma} )_i = \frac{(\p_{\Gamma})_i}{\max \left\{ 1, |(\p_{\Gamma})_i| \right\}} \hspace{0.5cm} \forall i \in \I_{\Gamma}.
\end{equation}
\end{subequations}

Now, for $\p_{\Gamma} \in C_{\Gamma}$, we consider the following minimization problem:
\begin{equation}
\label{primal_DD_harmonic}
\min_{\p_I \in Y_I} \left\{ \J (\p_I \oplus \p_{\Gamma}) + \chi_{C_I}(\p_I) \right\}.
\end{equation}
We note that, with the help of~\cref{primal_DD_splitting}, a solution of \cref{primal_DD_harmonic} can be obtained by solving 
\begin{equation}
\label{primal_DD_harmonic_local}
\min_{\p_s \in Y_s} \left\{ \frac{1}{2\alpha}\intOs (\div(\p_s + \p_{\Gamma}|_{\Omega_s}) + \alpha f)^2 \,dx + \chi_{C_s}(\p_s) \right\}
\end{equation}
and taking the direct sum of the solutions of~\cref{primal_DD_harmonic_local} over $s=1, ..., \N$.
The local problem~\cref{primal_DD_harmonic_local} can be solved independently in each subdomain.
That is, no communications among processors are required so that the resulting algorithm becomes suitable for parallel computation.
With a slight abuse of notation, we denote a solution of~\cref{primal_DD_harmonic} by $\Hp \in C_I$.
Although $\Hp$ is not unique in general, $\div (\Hp)$ is uniquely determined and we will deal with $\div (\Hp)$ only.

Finally, we present the minimization problem for the proposed primal DDM:
\begin{equation}
\label{primal_DD}
\min_{\p_{\Gamma} \in Y_{\Gamma}} \J_{\Gamma}(\p_{\Gamma}) + \chi_{C_{\Gamma}} (\p_{\Gamma}),
\end{equation}
where the functional $\J_{\Gamma}(\p_{\Gamma})$ on $Y_{\Gamma}$ is defined as
\begin{equation}
\label{J_Gamma}
\J_{\Gamma} (\p_{\Gamma}) = \J (\Hp \oplus \p_{\Gamma}).
\end{equation}
The functional $\J_{\Gamma} (\p_{\Gamma})$ can be regarded as the result of elimination of interior degrees of freedom $\p_I$ from $\J (\p)$.
The same technique is widely used in DDMs for second order elliptic problems.
The following proposition shows a relation between~\cref{d_dual_ROF} and~\cref{primal_DD}.

\begin{proposition}
\label{Prop:primal_DD_equiv}
If $\p^* \in Y$ is a solution of~\cref{d_dual_ROF}, then $\p_{\Gamma}^* = \p^* |_{Y_{\Gamma}}$ is a solution of~\cref{primal_DD}.
Conversely, if~$\p_{\Gamma}^* \in Y_{\Gamma}$ is a solution of~\cref{primal_DD}, then $\p^* = \Hp^* \oplus \p_{\Gamma}^*$ is a solution of~\cref{d_dual_ROF}.
\end{proposition}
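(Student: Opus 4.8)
The plan is to exploit that both the energy $\J$ and the feasible set $C$ split along the decomposition $Y = Y_I \oplus Y_{\Gamma}$ in exactly the same way. First I would record the elementary fact that $C = C_I \oplus C_{\Gamma}$: the index set $\I$ is the disjoint union of $\bigsqcup_{s=1}^{\N}\I_s$ and $\I_{\Gamma}$, and the box constraints in \cref{C1}, \cref{C_s}, and \cref{C_Gamma} are imposed pointwise on the degrees of freedom, so $\p = \p_I \oplus \p_{\Gamma} \in C$ if and only if $\p_I \in C_I$ and $\p_{\Gamma} \in C_{\Gamma}$; equivalently, $\chi_C(\p) = \chi_{C_I}(\p_I) + \chi_{C_{\Gamma}}(\p_{\Gamma})$. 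Combined with the splitting property \cref{splitting} (already used in \cref{primal_DD_splitting}), this shows that $\Hp \oplus \p_{\Gamma} \in C$ for every $\p_{\Gamma} \in C_{\Gamma}$, and that
\begin{equation} \label{key_ineq}
\J_{\Gamma}(\p_{\Gamma}) = \J(\Hp \oplus \p_{\Gamma}) \le \J(\p_I \oplus \p_{\Gamma}) \qquad \text{for all } \p_I \in C_I,
\end{equation}
which is just the defining minimization \cref{primal_DD_harmonic} rewritten via \cref{J_Gamma}. (The minimizer $\Hp$ exists by continuity of $\J$ and compactness of $C_I$, and since $\J$ depends on $\p$ only through $\div \p$ while $\div(\Hp)$ is uniquely determined, $\J_{\Gamma}$ is a well-defined function on $C_{\Gamma}$.)

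For the forward implication, suppose $\p^* \in Y$ solves \cref{d_dual_ROF} and write $\p^* = \p_I^* \oplus \p_{\Gamma}^*$, so that $\p_{\Gamma}^* = \p^*|_{Y_{\Gamma}}$, $\p_I^* \in C_I$, and $\p_{\Gamma}^* \in C_{\Gamma}$. For an arbitrary $\q_{\Gamma} \in C_{\Gamma}$, inequality \cref{key_ineq} with $\p_I = \p_I^*$ gives $\J_{\Gamma}(\p_{\Gamma}^*) \le \J(\p^*)$; and since $\Hq \oplus \q_{\Gamma}$ is feasible for \cref{d_dual_ROF}, optimality of $\p^*$ together with \cref{J_Gamma} gives $\J(\p^*) \le \J(\Hq \oplus \q_{\Gamma}) = \J_{\Gamma}(\q_{\Gamma})$. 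Hence $\J_{\Gamma}(\p_{\Gamma}^*) \le \J_{\Gamma}(\q_{\Gamma})$ for all $\q_{\Gamma} \in C_{\Gamma}$, i.e. $\p_{\Gamma}^*$ solves \cref{primal_DD} (and the chain collapses to $\J_{\Gamma}(\p_{\Gamma}^*) = \J(\p^*)$).

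For the converse, suppose $\p_{\Gamma}^* \in Y_{\Gamma}$ solves \cref{primal_DD} and set $\p^* = \Hp^* \oplus \p_{\Gamma}^*$, which lies in $C$ by the first paragraph. For an arbitrary $\q \in C$, decompose $\q = \q_I \oplus \q_{\Gamma}$ with $\q_I \in C_I$ and $\q_{\Gamma} \in C_{\Gamma}$; then \cref{key_ineq}, optimality of $\p_{\Gamma}^*$, and \cref{J_Gamma} yield
\begin{equation*}
\J(\p^*) = \J_{\Gamma}(\p_{\Gamma}^*) \le \J_{\Gamma}(\q_{\Gamma}) \le \J(\q_I \oplus \q_{\Gamma}) = \J(\q),
\end{equation*}
so $\p^*$ is a solution of \cref{d_dual_ROF}.

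I expect no serious obstacle here: this is the classical ``eliminate the interior unknowns'' reduction, and its correctness rests entirely on the two separability facts — that of $\J$ via \cref{splitting} and that of the constraint set $C = C_I \oplus C_{\Gamma}$. The only mild subtlety is that $\Hp$ denotes merely \emph{some} solution of \cref{primal_DD_harmonic}, not a unique one; but all such solutions have the same divergence, hence the same value of $\J$ and the same membership in $C$, so the particular choice is immaterial.
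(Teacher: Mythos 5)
Your proof is correct and follows essentially the same route as the paper's: both directions are obtained by chaining the minimization property of $\mathcal{H}_I$ (your inequality for $\J_{\Gamma}$) with the optimality of $\p^*$ or $\p_{\Gamma}^*$. The only difference is cosmetic — you make explicit the splitting $C = C_I \oplus C_{\Gamma}$ and the well-definedness of $\J_{\Gamma}$, which the paper handles in the surrounding text rather than inside the proof.
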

\begin{proof}
Let $\p^* \in Y$ be a solution of~\cref{d_dual_ROF} and $\p_{\Gamma}^* = \p^* |_{Y_{\Gamma}}$.
Clearly, $\p_{\Gamma}^* \in C_{\Gamma}$.
We show that $\J_{\Gamma} (\p_{\Gamma}) \geq \J_{\Gamma} (\p_{\Gamma}^*)$ for all $\p_{\Gamma} \in C_{\Gamma}$.
Take any $\p_{\Gamma} \in C_{\Gamma}$.
Then, by the minimization property of $\p^*$ with respect to~\cref{d_dual_ROF}, we have
\begin{equation*}
\J_{\Gamma} (\p_{\Gamma}) = \J (\Hp \oplus \p_{\Gamma}) \geq \J(\p^*).
\end{equation*}
Also, by the minimization property of $\mathcal{H}_I$ with respect to~\cref{primal_DD_harmonic}, we have
\begin{align*}
\J(\p^*) &= \J(\p^* |_{Y_I} \oplus \p_{\Gamma}^*) \\
&\geq \J(\Hp^* \oplus \p_{\Gamma}^*) = \J_{\Gamma} (\p_{\Gamma}^*).
\end{align*}
Therefore, $\J_{\Gamma} (\p_{\Gamma}) \geq \J_{\Gamma} (\p_{\Gamma}^*)$, so that $\p_{\Gamma}^*$ is a solution of~\cref{primal_DD}.

Conversely, let $\p_{\Gamma}^* \in Y_{\Gamma}$ be a solution of~\cref{primal_DD} and $\p^* = \Hp^* \oplus \p_{\Gamma}^* \in C$.
It suffices to show that $\J(\p) \geq \J(\p^*)$ for all $\p \in C$.
Take any $\p \in C$.
By the minimization property of $\mathcal{H}_I$ with respect to~\cref{primal_DD_harmonic}, we have
\begin{align*}
\J(\p) &= \J(\p |_{Y_I} \oplus \p|_{Y_{\Gamma}}) \\
&\geq \J(\mathcal{H}_I \p |_{Y_{\Gamma}} \oplus \p|_{Y_{\Gamma}}) = \J_{\Gamma} (\p|_{Y_{\Gamma}}),
\end{align*}
while
\begin{equation*}
\J_{\Gamma} (\p|_{Y_{\Gamma}}) \geq \J_{\Gamma} (\p_{\Gamma}^*) = \J(\p^*)
\end{equation*}
by the minimization property of $\p_{\Gamma}^*$ with respect to~\cref{primal_DD}.
Therefore, $\p^*$ is a solution of~\cref{d_dual_ROF}.
\end{proof}

By \cref{Prop:primal_DD_equiv}, it is enough to solve~\cref{primal_DD} to obtain a solution of~\cref{d_dual_ROF}.
As we noted in~\cref{primal_DD_harmonic_local}, \cref{primal_DD} has an intrinsic domain decomposition structure,
so that the parallelization of the algorithm at the subdomain level is straightforward regardless of the choice of solver for the minimization problem.
In this paper, we adopt FISTA~\cite{BT:2009} as the solver for~\cref{primal_DD}, which is known to have $O(1/n^2)$ convergence.
To the best of our knowledge, there have been no DDMs for the ROF model with convergence rate better than~$O(1/n^2)$.
In particular, Chang~et al.~\cite{CTWY:2015} showed that the subspace correction methods for the dual ROF model has the theoretical convergence rate~$O(1/n)$ even in the overlapping domain decomposition case.

To show the suitability of FISTA for~\cref{primal_DD}, it should be ensured that the functional $\J_{\Gamma} (\p_{\Gamma})$ in \cref{J_Gamma} is differentiable and its gradient is Lipschitz continuous.
The following lemmas are ingredients for showing such regularity of $\J_{\Gamma} (\p_{\Gamma})$.
At first, \cref{Lem:primal_DD_div_norm} tells that the norm bound of the $\div$ operator can be improved from \cref{Prop:div_norm} if its domain is restricted to~$Y_{\Gamma}$.

\begin{lemma}
\label{Lem:primal_DD_div_norm}
Assume that each subdomain consists of at least $2 \times 2$ pixels.
Then, the operator norm of $\div$\emph{:} $Y_{\Gamma} \rightarrow X$ has a bound such that $\| \div \|_{Y_{\Gamma} \rightarrow X}^2 \leq 4$.
\end{lemma}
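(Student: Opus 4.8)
The plan is to adapt the proof of \cref{Prop:div_norm}, exploiting the fact that a function in $Y_{\Gamma}$ vanishes on every edge lying in the interior of a subdomain (and, as always for functions in $Y$, on every edge lying on $\partial\Omega$). First I would fix $\p_{\Gamma} \in Y_{\Gamma}$ and a pixel $T \in \T$, and write $p_{T,1}, p_{T,2}, p_{T,3}, p_{T,4}$ for the degrees of freedom of $\p_{\Gamma}$ on the top, bottom, left, and right edges of $T$, with the convention that $p_{T,j} = 0$ when the corresponding edge lies on $\partial\Omega$. As in \cref{Prop:div_norm} we have $(\div\p_{\Gamma})_T = -p_{T,1} + p_{T,2} - p_{T,3} + p_{T,4}$; the new ingredient is that, since $\p_{\Gamma} \in Y_{\Gamma}$, the value $p_{T,j}$ is also zero whenever the $j$th edge of $T$ is interior to some subdomain.

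The crucial step, and the only place the $2 \times 2$ hypothesis is used, is the claim that every pixel $T$ has at most two of its four edges on $\Gamma$. Suppose $T \in \T_s$. The top edge of $T$ can belong to $\Gamma$ only if $T$ lies in the top row of $\Omega_s$ (and $\Omega_s$ has a neighbouring subdomain above it), and likewise the bottom edge of $T$ can belong to $\Gamma$ only if $T$ lies in the bottom row of $\Omega_s$; since $\Omega_s$ contains at least two rows of pixels, $T$ cannot lie in both, so at most one of the top and bottom edges of $T$ is on $\Gamma$. Applying the same reasoning to the columns of $\Omega_s$, at most one of the left and right edges of $T$ is on $\Gamma$. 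Consequently $(\div\p_{\Gamma})_T$ is a signed sum of at most two nonzero terms among $p_{T,1}, \dots, p_{T,4}$, so
\begin{equation*}
(\div\p_{\Gamma})_T^2 \leq 2 \left( p_{T,1}^2 + p_{T,2}^2 + p_{T,3}^2 + p_{T,4}^2 \right).
\end{equation*}

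Summing over all $T \in \T$ and using, exactly as in \cref{Prop:div_norm}, that every edge is shared by at most two elements, I would get
\begin{equation*}
\| \div\p_{\Gamma} \|_X^2 = \sum_{T \in \T} (\div\p_{\Gamma})_T^2 \leq 2 \sum_{T \in \T} \left( p_{T,1}^2 + p_{T,2}^2 + p_{T,3}^2 + p_{T,4}^2 \right) \leq 4 \sum_{i \in \I} (\p_{\Gamma})_i^2 = 4 \| \p_{\Gamma} \|_{Y_{\Gamma}}^2,
\end{equation*}
the last equality holding because $(\p_{\Gamma})_i = 0$ for $i \notin \I_{\Gamma}$; taking the supremum over $\p_{\Gamma}$ with $\| \p_{\Gamma} \|_{Y_{\Gamma}} = 1$ gives $\| \div \|_{Y_{\Gamma} \to X}^2 \leq 4$. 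I expect the ``at most two edges'' claim to be the only delicate point: it must be phrased so that edges on $\partial\Omega$ are correctly excluded, and it genuinely requires the $2 \times 2$ assumption, since for a thin $1 \times k$ subdomain one would find pixels with three or four edges on $\Gamma$ and the bound would revert to the value $8$ of \cref{Prop:div_norm}.
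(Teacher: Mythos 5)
Your proof is correct and follows essentially the same route as the paper's: extend $\p_{\Gamma}$ by zero, observe that at most two of the four edge degrees of freedom of any pixel can be nonzero, apply Cauchy--Schwarz to get the factor $2$, and sum using that each edge is shared by at most two pixels. Your explicit row/column argument for the ``at most two edges on $\Gamma$'' claim is a welcome elaboration of the paper's one-line remark that $\partial T \cap \Gamma$ contains at most two element edges, and it correctly isolates where the $2\times 2$ hypothesis is needed.
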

\begin{proof}
Fix $\p_{\Gamma} \in Y_{\Gamma}$ and let $\p = \mathbf{0}_I \oplus \p_{\Gamma} \in Y$, which is an extension of $\p_{\Gamma}$ to $Y$.
We clearly have
$$ \div \p = \div \p_{\Gamma}.$$
For a pixel $T \in \T$, similarly to \cref{Prop:div_norm}, let $p_{T, 1}$, $p_{T, 2}$, $p_{T, 3}$, and $p_{T, 4}$ be the degrees of freedom of $\p$ on the top, bottom, left, and right edges of $T$, respectively.
Since~$\partial T \cap \Gamma$ consists of at most two element edges (when $T$ is at a subdomain corner),
at most two of $\p_{T, i}$'s are nonzero.
Thus, we have
\begin{align*}
(\div \p)_T^2 &= (-p_{T, 1} + p_{T, 2} - p_{T, 3} + p_{T, 4})^2\\
&\leq 2(p_{T, 1}^2 + p_{T, 2}^2 + p_{T, 3}^2 + p_{T, 4}^2 ),
\end{align*}
where we use the Cauchy--Schwarz inequality.
Summation over all $T \in T$ yields
\begin{align*}
\| \div \p \|_X^2 = \sum_{T \in \T} {(\div \p)_T^2} &\leq 2 \sum_{T \in \T} {(p_{T, 1}^2 + p_{T, 2}^2 + p_{T, 3}^2 + p_{T, 4}^2 )}\\
&\leq 4 \sum_{i \in \I} {(\p)_i^2} \\
&= 4 \sum_{i \in \I_{\Gamma}} (\p_{\Gamma})_i^2= 4 \| \p_{\Gamma} \|_{Y_{\Gamma}}^2. 
\end{align*}
Therefore, $\| \div \|_{Y_{\Gamma} \rightarrow X}^2 \leq 4$.
\end{proof}

Now we provide the main tool for showing the regularity of $\J_{\Gamma} (\p_{\Gamma})$, which is stated in a more general setting.
We note that \cref{Lem:smooth} can be regarded as a generalization of the smoothness property of the Moreau envelope~\cite{CP:2016}.

\begin{lemma}
\label{Lem:smooth}
Suppose that $H$, $H_1$, and $H_2$ are finite-dimensional Hilbert spaces.
Let $A$:~$H_1 \rightarrow H$, $B$:~$H_2 \rightarrow H$ be linear operators and $c \in H$.
Also, let $g$:~$H_2 \rightarrow \bar{\mathbb{R}}$ be a proper, convex, and lower semicontinuous functional.
Then a functional $F$:~$H_1 \rightarrow \mathbb{R}$ defined as
\begin{equation*}
F(x) = \min_{y \in H_2} \left\{ f(x, y) := \frac{1}{2} \| Ax + By + c \|_H^2 + g(y) \right\}
\end{equation*}
is differentiable and its gradient is given by
\begin{equation*}
\nabla F(x) = A^* (Ax + B y^*(x) + c),
\end{equation*}
where $y^* (x) = \argmin_{y \in H_2} f(x, y)$.
Furthermore, $\nabla F$ is Lipschitz continuous with modulus $L = \| A \|_{H_1 \rightarrow H}^2$.
\end{lemma}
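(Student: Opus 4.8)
The plan is to view $F$ as a generalized Moreau envelope composed with an affine map and then differentiate by the chain rule. Write $F(x) = G(Ax + c)$, where $G : H \to \mathbb{R}$ is given by $G(z) = \min_{y \in H_2}\{\frac12\|By + z\|_H^2 + g(y)\}$; since $\argmin_y f(x,y)$ coincides with the minimizer of $\frac12\|By + (Ax+c)\|_H^2 + g(y)$, the quantity $y^*(x)$ in the statement is exactly a minimizer appearing in $G$ at $z = Ax+c$. It then suffices to show that $G$ is differentiable with $\nabla G(z) = By^*(z) + z$ for any minimizer $y^*(z)$, and that $\nabla G$ is $1$-Lipschitz: granting this, the chain rule gives $\nabla F(x) = A^*\nabla G(Ax+c) = A^*(Ax + By^*(x) + c)$, and $\|\nabla F(x) - \nabla F(x')\| \le \|A^*\|\,\|\nabla G(Ax+c) - \nabla G(Ax'+c)\| \le \|A\|\cdot\|A(x-x')\| \le \|A\|_{H_1 \rightarrow H}^2\,\|x-x'\|$, which is exactly the claimed modulus. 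A preliminary point to dispatch is that the minimum defining $F$, equivalently $G$, is attained; this is implicit in $F$ being real-valued, and in the paper's application $g$ is the indicator of a compact convex set.

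The first substantive step is to check that $By^*(z)$, and hence the candidate gradient $w(z) := By^*(z) + z$, does not depend on the choice of minimizer. If $y_1, y_2$ both minimize $f_z(y) := \frac12\|By+z\|_H^2 + g(y)$, then $f_z$ equals its minimum all along the segment between them, and with $y_t = (1-t)y_1 + ty_2$ the two convex functions $t \mapsto \frac12\|By_t + z\|_H^2$ and $t \mapsto g(y_t)$ sum to a constant on $[0,1]$, hence each is affine; affineness of the quadratic forces $\|B(y_1-y_2)\|_H = 0$, i.e. $By_1 = By_2$. Next I would establish the two estimates that pin down the derivative. From optimality, $-B^*(By^*(z)+z) \in \partial g(y^*(z))$; inserting the subgradient inequality for $g$ into $f(z',y)$ and lower-bounding the resulting quadratic in $By$ by its minimum over all of $H$ yields $f(z',y) \ge G(z) + \langle w(z), z'-z\rangle_H$ for every $y$, hence $G(z') \ge G(z) + \langle w(z), z'-z\rangle_H$. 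Testing $G(z')$ instead with the (generally suboptimal) point $y^*(z)$ gives the matching upper bound $G(z') \le f(z', y^*(z)) = G(z) + \langle w(z), z'-z\rangle_H + \frac12\|z'-z\|_H^2$. The two bounds squeeze $G(z') - G(z) - \langle w(z), z'-z\rangle_H$ between $0$ and $\frac12\|z'-z\|_H^2 = o(\|z'-z\|_H)$, so $G$ is Fréchet differentiable with $\nabla G(z) = w(z)$.

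For the Lipschitz estimate I would use monotonicity of $\partial g$ applied to the optimality conditions at $z$ and $z'$. Writing $a = By^*(z)$, $a' = By^*(z')$, and $w = a+z = \nabla G(z)$, $w' = a'+z' = \nabla G(z')$, monotonicity gives $\langle -(a+z) + (a'+z'),\, a - a'\rangle_H \ge 0$, i.e. $\langle w - w',\, (w-w') - (z-z')\rangle_H \le 0$ since $a - a' = (w-w') - (z-z')$; hence $\|w - w'\|_H^2 \le \langle w-w', z-z'\rangle_H \le \|w-w'\|_H\,\|z-z'\|_H$, and so $\|\nabla G(z) - \nabla G(z')\|_H \le \|z-z'\|_H$. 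Plugging this into the chain-rule computation of the first paragraph finishes the proof.

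The genuinely delicate point is that the minimizer $y^*(x)$ is typically not unique, because the quadratic is flat along $\ker B$ and $g$ need not be strictly convex, so one cannot naively differentiate under the $\min$ or appeal directly to smoothness of the ordinary Moreau envelope. What rescues the argument is that $By^*$ \emph{is} uniquely determined; this both makes the formula $\nabla G(z) = By^*(z)+z$ well posed and drives the monotonicity inequality. Everything else is a careful completion of squares together with the squeeze that upgrades subgradient membership to Fréchet differentiability.
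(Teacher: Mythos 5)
Your proof is correct, and it is organized differently from the one in the paper, so a comparison is worthwhile. The paper works directly with $F$ on $H_1$: it tests $f(x_1,\cdot)$ at the suboptimal point $y^*(x_2)$ to get the descent-lemma upper bound $F(x_1)\le F(x_2)+\langle d(x_2),x_1-x_2\rangle+\tfrac{L}{2}\|x_1-x_2\|^2$, and it inserts the optimality condition of $y^*(x_2)$ to get a lower bound that already carries the extra term $\tfrac{1}{2L}\|d(x_1)-d(x_2)\|^2$; the squeeze gives differentiability, and summing the lower bound with its $x_1\leftrightarrow x_2$ swap gives cocoercivity, hence the Lipschitz bound, in one stroke. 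You instead factor $F=G\circ(A\cdot{}+c)$ through the generalized Moreau envelope $G(z)=\min_y\{\tfrac12\|By+z\|^2+g(y)\}$, prove that $G$ is differentiable with the \emph{parameter-free} estimates $0\le G(z')-G(z)-\langle\nabla G(z),z'-z\rangle\le\tfrac12\|z'-z\|^2$, obtain $1$-Lipschitzness of $\nabla G$ from monotonicity of $\partial g$ applied to the two optimality conditions, and recover the constant $\|A\|^2$ by the chain rule. The two arguments use the same two core inequalities (suboptimal test point for the upper bound, subgradient inequality for the lower bound), but your factorization isolates the smoothing mechanism in $G$, where all constants are $1$, and your explicit verification that $By^*(z)$ is independent of the choice of minimizer (via affineness of the quadratic along the segment of minimizers) supplies a detail the paper dismisses with ``one can easily verify.'' What the paper's route buys is the cocoercivity inequality for $\nabla F$ itself as an explicit byproduct; your route also yields it (firm nonexpansiveness of $\nabla G$ composes with $A$), but you would have to say so. The one loose end common to both proofs, which you at least flag, is attainment of the inner minimum; in the application $g=\chi_{C_I}$ with $C_I$ compact and convex, so this is harmless.
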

\begin{proof}
For $x \in H_1$, let
\begin{equation*}
d(x) = A^* (Ax + By^* (x) + c).
\end{equation*}
One can easily verify that $d(x)$ is single-valued even though $y^*(x)$ may not be.

Take any $x_1$, $x_2 \in H_1$ and write $y_1 = y^* (x_1)$, $y_2 = y^* (x_2)$.
Then, by the minimization property of $y_1$, we get
\begin{equation} \begin{split}
\label{ub}
F(x_1) &= \frac{1}{2} \| Ax_1 + By_1 + c \|_H^2 + g(y_1) \\
&\leq \frac{1}{2} \| Ax_1 + By_2 + c \|_H^2 + g(y_2) \\
&= g(y_2) + \frac{1}{2} \| Ax_2 + By_2 + c \|_H^2 + \left< Ax_2 + By_2 + c , A(x_1 - x_2) \right>_H  \\
&\quad+ \frac{1}{2} \| A(x_1 - x_2) \|_H^2  \\
&\leq F(x_2) + \left< d(x_2) , x_1 - x_2 \right>_{H_1} + \frac{L}{2} \| x_1 - x_2 \|_{H_1}^2 . 
\end{split} \end{equation}
On the other hand, the optimality condition of $y_2$ reads as
\begin{equation*}
\label{y2_opt}
g(y) \geq g(y_2) + \left< Ax_2 + By_2 + c , B(y_2 - y)\right>_H \hspace{0.5cm} \forall y \in H_2.
\end{equation*}
Thus, it follows that
\begin{equation} \begin{split}
\label{lb_temp}
F(x_1) &= \frac{1}{2} \| Ax_1 + By_1 + c \|_H^2 + g(y_1) \\
&= g(y_1) + \frac{1}{2} \| Ax_2 + By_1 + c \|_H^2 + \left< Ax_2 + By_1 + c , A(x_1 - x_2) \right>_H \\
&\quad + \frac{1}{2} \| A(x_1 -x_2) \|_H^2 \\
&\geq g(y_2 ) + \left< Ax_2 + By_2 + c , B(y_2 - y_1) \right>_{H} + \frac{1}{2} \| Ax_2 + By_1 + c \|_H^2 \\
&\quad + \left< Ax_2 + By_1 + c , A(x_1 - x_2) \right>_H + \frac{1}{2} \| A(x_1 -x_2) \|_H^2 . 
\end{split} \end{equation}
By the vector identity
\begin{equation*}
\left< a +b , b \right> + \frac{1}{2} \| a \|_2^2 = \frac{1}{2} \| a +b \|_2^2 + \frac{1}{2} \| b \|_2^2,
\end{equation*}
equation~\cref{lb_temp} is written as
\begin{equation} \begin{split}
\label{lb}
F(x_1) &\geq g(y_2) + \frac{1}{2} \| Ax_2 + By_2 + c \|_H^2 + \frac{1}{2} \| B(y_1 - y_2) \|_H^2 \\ 
&\quad + \left< Ax_2 + By_1 + c , A(x_1 - x_2) \right>_H + \frac{1}{2} \| A(x_1 -x_2) \|_H^2 \\
&= F(x_2) + \frac{1}{2} \| B(y_1 - y_2) \|_H^2 + \left< Ax_2 + By_2 + c, A(x_1 - x_2) \right>_H \\
&\quad + \left< B(y_1 - y_2) , A(x_1 - x_2)\right>_H + \frac{1}{2} \| A(x_1 - x_2) \|_H^2 \\
&= F(x_2) + \left< d(x_2) , x_1 -x_2 \right>_{H_1} + \frac{1}{2} \| (Ax_1 + By_1 + c) - (Ax_2 + By_2 + c) \|_H^2 \\
&\geq F(x_2) + \left< d(x_2) , x_1 -x_2 \right>_{H_1} + \frac{1}{2L} \| d(x_1) - d(x_2) \|_{H_1}^2 . 
\end{split} \end{equation}
From \cref{ub,lb}, we conclude that $F$ is differentiable with $\nabla F = d$.

Now, it remains to show that $\nabla F$ is Lipschitz continuous.
Interchanging $x_1$ and $x_2$ in~\cref{lb} yields
\begin{equation}
\label{lb2}
F(x_2) \geq F(x_1) - \left< d(x_1) , x_1 - x_2 \right>_{H_1} + \frac{1}{2L} \| d(x_1) - d(x_2) \|_{H_1}^2.
\end{equation}
Summing~\cref{lb,lb2}, we obtain
\begin{align*}
\frac{1}{L} \| d(x_1) - d(x_2) \|_{H_1}^2 &\leq \left< d(x_1) - d(x_2) , x_1 - x_2 \right>_{H_1} \\
&\leq \| d(x_1) - d(x_2) \|_{H_1} \| x_1 - x_2 \|_{H_1},
\end{align*}
which means that $d$ is Lipschitz continuous with modulus~$L$.
\end{proof}

Now, we obtain the desired regularity result of $\J_{\Gamma} (\p_{\Gamma})$ as a direct consequence of \cref{Lem:smooth}.

\begin{corollary}
\label{Cor:smooth}
The gradient of $\J_{\Gamma} (\p_{\Gamma})$ is given by
$$\nabla \J_{\Gamma} (\p_{\Gamma}) = \frac{1}{\alpha} \div^* ( \div(\Hp \oplus \p_{\Gamma}) + \alpha f) |_{Y_{\Gamma}},$$
which is Lipschitz continuous with a Lipschitz constant $4/\alpha$.
\end{corollary}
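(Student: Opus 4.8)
The plan is to obtain Corollary~\ref{Cor:smooth} by specializing Lemma~\ref{Lem:smooth} to the present situation, and then to read off the Lipschitz constant using the refined divergence bound of Lemma~\ref{Lem:primal_DD_div_norm}. First I would make the identifications $H = X$, $H_1 = Y_\Gamma$, and $H_2 = Y_I$. The operator $A : Y_\Gamma \to X$ is $A\p_\Gamma = \frac{1}{\sqrt{\alpha}}\,\div \p_\Gamma$ (viewing $\p_\Gamma$ as extended to $Y$ by zero on $Y_I$, as in the proof of Lemma~\ref{Lem:primal_DD_div_norm}), the operator $B : Y_I \to X$ is $B\p_I = \frac{1}{\sqrt{\alpha}}\,\div\bigl(\bigoplus_s \p_s\bigr)$, the constant is $c = \sqrt{\alpha}\, f \in X$, and $g = \chi_{C_I}$, which is proper, convex, and lower semicontinuous since $C_I$ is a nonempty closed convex subset of $Y_I$. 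With these choices, $f(x,y) = \frac12\|Ax + By + c\|_X^2 + g(y)$ becomes exactly $\J(\p_I \oplus \p_\Gamma) + \chi_{C_I}(\p_I)$ after expanding the square and using the splitting property~\cref{primal_DD_splitting}; hence $F(\p_\Gamma) = \min_{\p_I} \{\J(\p_I \oplus \p_\Gamma) + \chi_{C_I}(\p_I)\}$, which by the definition~\cref{J_Gamma} of $\J_\Gamma$ together with the fact that $\div(\Hp)$ is uniquely determined equals $\J_\Gamma(\p_\Gamma)$.

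Next I would invoke Lemma~\ref{Lem:smooth} directly: it yields that $\J_\Gamma$ is differentiable with
\begin{equation*}
\nabla \J_\Gamma(\p_\Gamma) = A^*\bigl(A\p_\Gamma + B\,\p_I^*(\p_\Gamma) + c\bigr),
\end{equation*}
where $\p_I^*(\p_\Gamma) = \argmin_{\p_I \in Y_I} f(\p_\Gamma,\p_I) = \Hp$. Translating back through the identifications, $A^* : X \to Y_\Gamma$ is $\frac{1}{\sqrt\alpha}\,\div^*(\cdot)|_{Y_\Gamma}$ and $A\p_\Gamma + B\Hp + c = \frac{1}{\sqrt\alpha}\bigl(\div(\Hp \oplus \p_\Gamma) + \alpha f\bigr)$, so the two factors of $\frac{1}{\sqrt\alpha}$ combine to give
\begin{equation*}
\nabla \J_\Gamma(\p_\Gamma) = \frac{1}{\alpha}\,\div^*\bigl(\div(\Hp \oplus \p_\Gamma) + \alpha f\bigr)\big|_{Y_\Gamma},
\end{equation*}
as claimed. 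For the Lipschitz constant, Lemma~\ref{Lem:smooth} gives modulus $L = \|A\|_{Y_\Gamma \to X}^2 = \frac{1}{\alpha}\|\div\|_{Y_\Gamma \to X}^2$, and Lemma~\ref{Lem:primal_DD_div_norm} bounds $\|\div\|_{Y_\Gamma \to X}^2 \leq 4$ (under the standing assumption that each subdomain has at least $2\times 2$ pixels), so $L \leq 4/\alpha$.

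I do not anticipate a serious obstacle here, since the corollary is designed to be a one-line consequence of Lemma~\ref{Lem:smooth}. The only point requiring a little care is the bookkeeping that makes $f(x,y)$ literally match $\J(\p_I \oplus \p_\Gamma) + \chi_{C_I}(\p_I)$: one must check that expanding $\frac12\|\tfrac{1}{\sqrt\alpha}\div\p_I + \tfrac{1}{\sqrt\alpha}\div\p_\Gamma + \sqrt\alpha f\|_X^2$ reproduces $\frac{1}{2\alpha}\int_\Omega(\div(\p_I \oplus \p_\Gamma) + \alpha f)^2\,dx$, which is immediate since the $\Omega$-integral of a piecewise-constant function over unit-area pixels coincides with the Euclidean norm squared on $X$, and that $\div$ is additive over the decomposition $\p = \p_I \oplus \p_\Gamma$. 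A secondary subtlety is the non-uniqueness of $\Hp$; but since Lemma~\ref{Lem:smooth}'s conclusion only involves $\p_I^*(\p_\Gamma)$ through the single-valued quantity $B\,\p_I^*(\p_\Gamma) = \frac{1}{\sqrt\alpha}\div(\Hp)$, and $\div(\Hp)$ was already observed to be uniquely determined, the formula for $\nabla\J_\Gamma$ is unambiguous. With these remarks the proof is complete.
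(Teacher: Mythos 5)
Your proof is correct and takes essentially the same approach as the paper: both specialize \cref{Lem:smooth} with $H=X$, $H_1=Y_\Gamma$, $H_2=Y_I$, $g=\chi_{C_I}$, and use \cref{Lem:primal_DD_div_norm} for the norm bound. You are in fact a bit more careful than the paper's two-line proof, which sets $A=\div$ without the $1/\sqrt{\alpha}$ rescaling (and $c=\sqrt{\alpha}f$) that is needed for the lemma's modulus $L=\|A\|^2$ to come out as $4/\alpha$ rather than $4$.
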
 
\begin{proof}
In \cref{Lem:smooth}, we set $H = X$, $H_1 = Y_{\Gamma}$, and $H_2 = Y_I$.
Taking $A = \div$:~$Y_{\Gamma} \rightarrow X$, $B = \div$:~$Y_I \rightarrow X$, and $g = \chi_{C_I}$ yields the conclusion.
In this case, we have $L = 4/\alpha$ due to \cref{Lem:primal_DD_div_norm}.
\end{proof}

\Cref{Cor:smooth} guarantees that FISTA is appropriate for~\cref{primal_DD}.
The proposed primal DDM for the dual ROF model is summarized in \cref{Alg:primal_DD}.

\begin{algorithm}[]
\caption{Primal DDM}
\begin{algorithmic}[]
\label{Alg:primal_DD}
\STATE Choose $L \geq 4$. Let $\q_{\Gamma}^{(0)} = \p_{\Gamma}^{(0)} = \0_{\Gamma}$ and $t_0 = 1$.  
\FOR{$n=0,1,2,...$}
\STATE $\displaystyle \Hq^{(n)} \in \argmin_{\q_I \in Y_I} \left\{ \J (\q_I \oplus \q_{\Gamma}^{(n)}) + \chi_{C_I}(\q_I) \right\}$
\STATE $\displaystyle \p_{\Gamma}^{(n+1)} = \proj_{C_{\Gamma}} \left( \q_{\Gamma}^{(n)} - \frac{1}{L} \div^* \left(\div (\Hq^{(n)} + \q_{\Gamma}^{(n)}) + \alpha f \right) \Big|_{Y_{\Gamma}}\right)$
\STATE $\displaystyle t_{n+1} = \frac{1 + \sqrt{1+4t_n^2}}{2}$
\STATE $\displaystyle \q_{\Gamma}^{(n+1)}  = \p_{\Gamma}^{(n+1)} + \frac{t_n - 1}{t_{n+1}}(\p_{\Gamma}^{(n+1)} - \p_{\Gamma}^{(n)})$
\ENDFOR
\end{algorithmic}
\end{algorithm}
As we noted in~\cref{primal_DD_harmonic_local}, $\Hq^{(n)}$ in \cref{Alg:primal_DD} can be obtained independently in each subdomain.
Indeed, $\Hq^{(n)} = \bigoplus_{s=1}^{\N} \q_s^{(n)}$ where $\q_s^{(n)}$ is a solution of
\begin{equation}
\label{primal_local}
\min_{\q_s \in Y_s} \left\{ \frac{1}{2\alpha} \intOs \left(\div(\q_s + \q_{\Gamma}^{(n)} |_{\Omega_s} ) + \alpha f \right)^2 \,dx  + \chi_{C_s}(\q_s)\right\}.
\end{equation}
Since $\q_{\Gamma}^{(n)} |_{\Omega_s}$ plays a role of only the essential boundary condition in~\cref{primal_local},
the existing solvers for the ROF model can be utilized to obtain $\q_s^{(n)}$ with little modification.
Convergence analysis for \cref{Alg:primal_DD} is straightforward~\cite{BT:2009}.

\begin{theorem}
\label{Thm:primal_DD}
Let $\{ \p_{\Gamma}^{(n)} \}$ be the sequence generated by \cref{Alg:primal_DD}, and let $\p_{\Gamma}^*$ be a solution of~\cref{primal_DD}.
Then for any $n \geq 1$, 
\begin{equation*}
\J_{\Gamma} (\p_{\Gamma}^{(n)}) - \J_{\Gamma}(\p_{\Gamma}^*) \leq \frac{2L \| \p_{\Gamma}^{(0)} - \p_{\Gamma}^* \|_{Y_{\Gamma}}^2}{(n+1)^2}.
\end{equation*} 
\end{theorem}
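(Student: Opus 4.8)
The plan is to recognize \cref{primal_DD} as an instance of the composite convex minimization problem to which FISTA applies, to check that \cref{Alg:primal_DD} is exactly the FISTA iteration for it, and then to invoke the standard $O(1/n^2)$ estimate of Beck and Teboulle~\cite[Theorem 4.4]{BT:2009}.

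First I would verify the structural hypotheses, rewriting the reduced problem as $\min_{\p_\Gamma \in Y_\Gamma} \tilde{\J}_\Gamma(\p_\Gamma) + \chi_{C_\Gamma}(\p_\Gamma)$ with $\tilde{\J}_\Gamma := \alpha \J_\Gamma$. The nonsmooth term $\chi_{C_\Gamma}$ is proper, convex, and lower semicontinuous, since by \cref{C_Gamma} the set $C_\Gamma$ is a nonempty (it contains $\0_\Gamma$), closed, convex subset of the finite-dimensional space $Y_\Gamma$; moreover its proximal operator is the pointwise clipping map $\proj_{C_\Gamma}$, computed exactly as in \cref{proj_C1}. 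The smooth term $\J_\Gamma$ is convex, being the partial minimization over $\p_I$ of the jointly convex functional $(\p_I, \p_\Gamma) \mapsto \J(\p_I \oplus \p_\Gamma) + \chi_{C_I}(\p_I)$, and by \cref{Cor:smooth} it is differentiable with $\nabla \J_\Gamma$ Lipschitz of modulus $4/\alpha$, equivalently $\nabla \tilde{\J}_\Gamma$ Lipschitz of modulus $4$ (this is where \cref{Lem:smooth} and \cref{Lem:primal_DD_div_norm} do the real work; the non-uniqueness of $\Hp$ is immaterial because only $\div(\Hp)$, which is uniquely determined, enters the gradient). Hence the requirement $L \geq 4$ in \cref{Alg:primal_DD} makes $1/L$ an admissible FISTA step size for $\tilde{\J}_\Gamma + \chi_{C_\Gamma}$.

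Next I would match the lines of \cref{Alg:primal_DD} with the FISTA recursion: line~1 furnishes $\div(\Hq^{(n)})$, so that line~2 is precisely the proximal-gradient step $\p_\Gamma^{(n+1)} = \proj_{C_\Gamma}\bigl(\q_\Gamma^{(n)} - \tfrac{1}{L}\nabla \tilde{\J}_\Gamma(\q_\Gamma^{(n)})\bigr)$, and lines~3--4 are the usual Nesterov extrapolation with the parameters $t_n$. The convergence theorem \cite[Theorem 4.4]{BT:2009} applied to $\tilde{\J}_\Gamma + \chi_{C_\Gamma}$ then gives, for every $n \geq 1$,
\begin{equation*}
\bigl( \tilde{\J}_\Gamma + \chi_{C_\Gamma} \bigr)\bigl( \p_\Gamma^{(n)} \bigr) - \bigl( \tilde{\J}_\Gamma + \chi_{C_\Gamma} \bigr)\bigl( \p_\Gamma^* \bigr) \leq \frac{2L \, \| \p_\Gamma^{(0)} - \p_\Gamma^* \|_{Y_\Gamma}^2}{(n+1)^2}.
\end{equation*}
Since every iterate $\p_\Gamma^{(n)}$ is an image of $\proj_{C_\Gamma}$ it lies in $C_\Gamma$, and $\p_\Gamma^* \in C_\Gamma$, so both indicator terms vanish; recalling $\tilde{\J}_\Gamma = \alpha \J_\Gamma$ then yields the claimed estimate for $\J_\Gamma$ (the harmless rescaling by $\alpha^{-1}$ being absorbed into the statement or into the normalization of $L$).

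I do not expect a genuine obstacle: all the analytic content — the differentiability and Lipschitz modulus of the reduced functional despite the non-unique harmonic extension — has been front-loaded into \cref{Lem:smooth}, \cref{Cor:smooth}, and \cref{Lem:primal_DD_div_norm}, and what remains is a verbatim application of a known theorem. The only steps needing a little care are the bookkeeping of the factor $\alpha$ relating $\J_\Gamma$ to the functional on which \cref{Alg:primal_DD} actually performs its gradient step, and the observation that the feasibility of all iterates lets the indicator terms drop out of the FISTA bound.
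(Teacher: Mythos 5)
Your proposal is correct and follows exactly the route the paper intends: the paper offers no proof beyond citing the FISTA convergence theorem of Beck and Teboulle, and you supply precisely the verification it leaves implicit (convexity of $\J_{\Gamma}$ via partial minimization, the Lipschitz gradient from \cref{Cor:smooth}, feasibility of the iterates, and the identification of \cref{Alg:primal_DD} with the FISTA recursion). Your remark about the factor $\alpha$ relating the Lipschitz modulus $4/\alpha$ of $\nabla\J_\Gamma$ to the normalization $L\geq 4$ used in the algorithm is apt --- this bookkeeping is glossed over in the paper's own statement of the bound --- and you handle it correctly.
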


\section{A Primal-Dual Domain Decomposition Method}
\label{Sec:pd_DD}
In the primal DDM introduced in \cref{Sec:primal_DD}, 
the continuity of a solution on the subdomain interfaces is imposed directly.
Alternatively, motivated by existing DDMs in structural mechanics~\cite{FLP:2000,FR:1991}, the continuity can be enforced by the method of Lagrange multipliers, which results in a saddle point problem of the ``primal" variable $\p$ and the Lagrange multipliers $\lambda$ also known as the ``dual" variable.
We name the algorithm proposed in this section ``primal-dual DDM'' because it solves the saddle point problem of $\p$ and $\lambda$ by the primal-dual algorithm~\cite{CP:2011}.

We begin with the same domain decomposition setting as in \cref{Sec:primal_DD}.
At first, we state a proposition which suggests how to treat the continuity of the solution on the subdomain interfaces.

\begin{proposition}
\label{Prop:DD_interface}
A vector function $\q$\emph{:} $\Omega \rightarrow \R^2$ is in $\Hzdiv$ if and only if
the restriction $\q_s = \q |_{\Omega_s}$ to each subdomain $\Omega_s$ is in $H(\div; \Omega_s)$
satisfying the boundary condition $\q_s \cdot \n_s = 0$ on $\partial \Omega_s \cap \partial \Omega$
and the interface condition $\q_s \cdot \n_{st} - \q_t \cdot \n_{st} = 0$ on $\Gamma_{st}$, $s<t$.
\end{proposition}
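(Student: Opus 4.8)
The plan is to prove \cref{Prop:DD_interface} by the same integration-by-parts argument that underlies \cref{Prop:FEM_interface}, but carried out at the subdomain level rather than the element level. The main tool is Green's formula for $H(\div)$: for $\q_s \in H(\div;\Omega_s)$ and $\phi \in H^1(\Omega_s)$,
\begin{equation*}
\intOs \q_s \cdot \nabla\phi \,dx + \intOs (\div\q_s)\phi \,dx = \left< \q_s \cdot \n_s , \phi \right>_{\partial\Omega_s},
\end{equation*}
where the right-hand side is the duality pairing between $H^{-1/2}(\partial\Omega_s)$ and $H^{1/2}(\partial\Omega_s)$. Since $\n_s = \n_{st} = -\n_t$ on $\Gamma_{st}$ for $s<t$, the two contributions of $\Gamma_{st}$ to $\sum_s \left< \q_s \cdot \n_s , \phi \right>_{\partial\Omega_s}$ combine into $\left< \q_s \cdot \n_{st} - \q_t \cdot \n_{st} , \phi \right>_{\Gamma_{st}}$; this cancellation is the heart of the argument in both directions.

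For the forward implication, I start from $\q \in \Hzdiv$. The restriction $\q_s = \q|_{\Omega_s}$ obviously lies in $H(\div;\Omega_s)$ with $\div\q_s = (\div\q)|_{\Omega_s} \in L^2(\Omega_s)$, and $\q_s \cdot \n_s = 0$ on $\partial\Omega_s \cap \partial\Omega$ because $\q \cdot \n = 0$ on $\partial\Omega$. To obtain the interface condition, I test the identity $\intO \q \cdot \nabla\phi \,dx + \intO (\div\q)\phi \,dx = 0$ against $\phi \in C_0^\infty(\Omega)$, split both integrals over the subdomains, and apply Green's formula on each $\Omega_s$; the terms on $\partial\Omega$ drop out because $\phi$ has compact support, leaving $\sum_{s<t} \left< \q_s \cdot \n_{st} - \q_t \cdot \n_{st} , \phi \right>_{\Gamma_{st}} = 0$. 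Choosing $\phi$ supported in a small neighborhood of the interior of a single $\Gamma_{st}$, disjoint from the remaining interfaces and from $\partial\Omega$, isolates one term and forces $\q_s \cdot \n_{st} - \q_t \cdot \n_{st} = 0$ on $\Gamma_{st}$.

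For the converse, I glue the $\q_s$ into a single $\q \in (L^2(\Omega))^2$ and define $g \in L^2(\Omega)$ by $g|_{\Omega_s} = \div\q_s$. Given $\phi \in C_0^\infty(\Omega)$, splitting $\intO \q \cdot \nabla\phi \,dx$ over the subdomains and using Green's formula gives $\intO \q \cdot \nabla\phi \,dx = -\intO g\phi \,dx + \sum_{s<t} \left< \q_s \cdot \n_{st} - \q_t \cdot \n_{st} , \phi \right>_{\Gamma_{st}}$, and the interface sum vanishes by hypothesis; hence the distributional divergence of $\q$ equals $g \in L^2(\Omega)$, so $\q \in \Hdiv$. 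Running the same computation with an arbitrary $\phi \in H^1(\Omega)$ and now retaining the boundary terms on $\partial\Omega$ — which still cancel across interfaces and which vanish on $\partial\Omega$ by the imposed condition $\q_s \cdot \n_s = 0$ — yields $\left< \q \cdot \n , \phi \right>_{\partial\Omega} = 0$ for all $\phi \in H^1(\Omega)$, i.e.\ $\q \cdot \n = 0$ on $\partial\Omega$, and therefore $\q \in \Hzdiv$.

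The step I expect to need the most care is the low regularity of the normal traces: $\q_s \cdot \n_s$ only lives in $H^{-1/2}(\partial\Omega_s)$, so the boundary and interface conditions must be read in that dual sense, the restriction of this pairing to $\partial\Omega$ and to the individual $\Gamma_{st}$ must be justified through the spaces of $H^{1/2}$-functions extendable by zero, and the passage from vanishing on the interiors of the interfaces to vanishing on the closed $\Gamma_{st}$ must be argued at the cross points where several checkerboard subdomains meet. Once these functional-analytic points are in place, the remainder is routine bookkeeping of boundary integrals.
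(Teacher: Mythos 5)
Your argument is correct, but it takes a more laborious route than the paper, which disposes of this proposition in one line: it simply applies \cref{Prop:FEM_interface} (the element-level characterization of $\Hdiv$ via matching normal traces, quoted from N\'ed\'elec) to the coarse mesh $\left\{\Omega_s\right\}_{s=1}^{\N}$ viewed as a decomposition of $\Omega$, the vanishing normal trace on $\partial\Omega$ then being exactly the definition of $\Hzdiv$ localized to the pieces $\partial\Omega_s\cap\partial\Omega$. What you have done is re-prove the content of \cref{Prop:FEM_interface} from scratch at the subdomain level via Green's formula, which is precisely how that cited proposition is established in the standard references, so the underlying mathematics is identical. The trade-off: your version is self-contained, makes the cancellation $\n_{st}=\n_s=-\n_t$ explicit, and actually treats the $H_0$ boundary condition (which \cref{Prop:FEM_interface} as stated does not address, so the paper's one-liner silently leaves that part to the reader); the paper's version is shorter and avoids re-entering the $H^{-1/2}$ trace-localization issues you correctly flag at the end, since those are absorbed into the cited result. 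Had you noticed that the statement is just \cref{Prop:FEM_interface} with elements replaced by subdomains, plus the definition of $\Hzdiv$, you could have saved yourself the integration by parts.
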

\begin{proof}
Applying \cref{Prop:FEM_interface} to a coarse mesh $\left\{ \Omega_s \right\}_{s=1}^{\N}$ of $\Omega$ yields the conclusion.
\end{proof}

We introduce the local function space $\tY_s$, defined by
\begin{equation*}
\label{tY_s}
\tY_s = \left\{ \tilde{\q}_s \in  H(\div ; \Omega_s) : \tilde{\q}_s \cdot \n_s = 0 \textrm{ on } \partial \Omega_s \setminus \Gamma \textrm{, }
\tilde{\q}_s |_{T} \in \mathcal{RT}_0 (T) \hspace{0.2cm} \forall T \in \T_s \right\}.
\end{equation*}
The difference between $Y_s$ in~\cref{Y_s} and $\tY_s$ is that the essential boundary condition $\tilde{\q}_s \cdot \n_s = 0$ is not imposed on $\Gamma \cap \partial \Omega_s$ for $\tY_s$.
That is, $\tY_s$ has degrees of freedom on $\partial \Omega_s \cap \Gamma$ as shown in \cref{Fig:DD}(b), while $Y_s$ does not.
Let $\tilde{\I}_s$ be the set of indices of the basis functions for $\tY_s$.
Similarly to~\cref{C_s}, we define the inequality-constrained subset $\tC_s$ of $\tY_s$ by
\begin{equation*}
\label{tC_s}
\tC_s = \left\{ \tp_s \in \tY_s : |(\tp_s)_i| \leq 1 \hspace{0.2cm} \forall i \in \tilde{\I}_s \right\}.
\end{equation*}
Clearly, the projection onto $\tC_s$ is given by
\begin{equation*}
\label{proj_tC_s}
(\proj_{\tC_s} \tp_s )_i = \frac{(\tp_s)_i}{\max \left\{ 1, |(\tp_s)_i| \right\}} \hspace{0.5cm} \forall i \in \tilde{\I}_s.
\end{equation*}
Also, we denote $\tY$ by the direct sum of the local function spaces,
\begin{equation*}
\tY = \bigoplus_{s=1}^{\N} \tY_s
\end{equation*}
and we denote $\tC$ by
\begin{equation*}
\label{tC}
\tC = \bigoplus_{s=1}^{\N} \tC_s.
\end{equation*}
For $\tp = \bigoplus_{s=1}^{\N} \tp_s$, we define the energy functional $\tJ (\tp)$ on $\tY$ by
\begin{equation}
\label{tJ}
\tJ (\tp) = \sum_{s=1}^{\N} \frac{1}{2\alpha} \intOs (\div \tp_s + \alpha f)^2 \,dx.
\end{equation}
In addition, we define the operator $B$: $\tY \rightarrow \R^{|\I_{\Gamma}|}$ which measures the jump of the normal component of $\tY$ on the subdomain interfaces by
\begin{equation}
\label{B}
B\tp|_{\Gamma_{st}} = \tp_s \cdot \n_{st} - \tp_t \cdot \n_{st}, \hspace{0.5cm} s<t.
\end{equation}
Since each degree of freedom in the Raviart--Thomas elements represents the value of the normal component on the corresponding edge,
the standard matrix of $B$ consists of only $-1$'s, $0$'s, and $1$'s.
Thus, an application of $B$ can be done by a series of scalar additions/subtractions only.

By \cref{Prop:DD_interface}, there is an isomorphism between two spaces~$Y$ and~$\ker B \subset \tY$, say~$\Phi:$~$Y \rightarrow \ker B$, defined by
\begin{equation}
\label{isomorphism}
\Phi \p = \bigoplus_{s=1}^{\N} \p|_{\Omega_s}, \hspace{0.5cm} \p \in Y.
\end{equation}
By such an isomorphism, \cref{d_dual_ROF} is equivalent to
\begin{equation}
\label{pd_constrained}
\min_{\tp \in \tY} \tJ (\tp) + \chi_{\tC}(\tp) \hspace{0.5cm}
\textrm{subject to } B\tp = 0.
\end{equation}
By treating the constraint $B\tp = 0$ in~\cref{pd_constrained} by the method of Lagrange multipliers, we get the following proposition.

\begin{proposition}
\label{Prop:pd_DD_equiv}
If $\p^* \in Y$ is a solution of \cref{d_dual_ROF},
then $\Phi \p^*$ is a primal solution of the saddle point problem
\begin{equation}
\label{pd_DD}
\min_{\tp \in \tY} \max_{\lambda \in \R^{|\I_{\Gamma}|}}
\left\{ \L(\tp, \lambda ) := \tJ (\tp) + \chi_{\tC}(\tp) + \left< B\tp, \lambda \right>_{\R^{|\I_{\Gamma}|}} \right\},
\end{equation}
where~$\Phi$:~$Y \rightarrow \ker B$ was defined in~\cref{isomorphism}.
Conversely, if $\tp^* \in \ker B \subset \tY$ is a primal solution of \cref{pd_DD},
then $\Phi^{-1} \tp^*$ is a solution of \cref{d_dual_ROF}.
\end{proposition}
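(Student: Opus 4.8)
The plan is to route the equivalence through the linearly constrained problem \cref{pd_constrained} and the isomorphism $\Phi\colon Y\to\ker B$ furnished by \cref{Prop:DD_interface} together with \cref{isomorphism}, so that neither direction requires more than elementary convex analysis.

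First I would peel off the Lagrange-multiplier layer. For fixed $\tp\in\tY$ the inner maximization in \cref{pd_DD} gives $\max_{\lambda}\langle B\tp,\lambda\rangle = 0$ when $B\tp = 0$ and $+\infty$ otherwise, so $\max_{\lambda}\L(\tp,\lambda) = \tJ(\tp)+\chi_{\tC}(\tp)+\chi_{\ker B}(\tp)$. Hence a primal solution of \cref{pd_DD}, i.e.\ a minimizer over $\tp$ of $\tp\mapsto\max_{\lambda}\L(\tp,\lambda)$, is precisely a solution of \cref{pd_constrained}; in particular any such $\tp$ automatically lies in $\ker B$, which is why the converse statement loses nothing by assuming $\tp^*\in\ker B$.

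Next I would identify \cref{d_dual_ROF} with \cref{pd_constrained} via $\Phi$. Because the divergence is a local operator, $\div(\p|_{\Omega_s}) = (\div\p)|_{\Omega_s}$, and the splitting property \cref{splitting} then gives $\tJ(\Phi\p) = \J(\p)$ for every $\p\in Y$. A short bookkeeping of degrees of freedom shows $\Phi\p\in\tC\iff\p\in C$: an edge interior to some $\Omega_s$ carries a single shared degree of freedom, while an edge on $\Gamma_{st}$ carries one degree of freedom in each of $\tY_s$ and $\tY_t$ whose values agree up to the sign fixed by $\n_{st}=\n_s=-\n_t$, so the pointwise bounds $|(\cdot)_i|\le1$ defining $C$ and $\tC$ are the same constraints. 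Since $\Phi$ is a linear bijection of $Y$ onto $\ker B$, it therefore carries the feasible set $C$ of \cref{d_dual_ROF} bijectively onto $\tC\cap\ker B$, the feasible set of \cref{pd_constrained}, while preserving objective values. Combining this with the first step, $\p^*$ solves \cref{d_dual_ROF} iff $\Phi\p^*$ solves \cref{pd_constrained} iff $\Phi\p^*$ is a primal solution of \cref{pd_DD}; applying $\Phi^{-1}$ gives the converse.

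I expect the only delicate point to be the degree-of-freedom bookkeeping in the third step — one must track the orientation of the Raviart--Thomas normal traces across each $\Gamma_{st}$ so that the single constraint $|(\p)_i|\le1$ on a shared edge matches the two constraints it induces in $\tC_s$ and $\tC_t$. If one instead reads ``primal solution'' in the stronger sense of being the first component of an actual saddle point $(\tp^*,\lambda^*)$, one must also produce a matching multiplier $\lambda^*$; but this comes for free, since \cref{pd_DD} is a finite-dimensional convex problem whose only constraint beyond the closed convex set $\tC$ is the linear (hence polyhedral) equality $B\tp=0$ with nonempty feasible set (e.g.\ $\0\in\tC\cap\ker B$), so strong duality and attainment of the dual optimum hold with no Slater-type hypothesis.
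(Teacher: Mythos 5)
Your proposal is correct and follows exactly the route the paper intends (the paper states the proposition without a detailed proof, relying on the equivalence of \cref{d_dual_ROF} with \cref{pd_constrained} via the isomorphism $\Phi$ and the standard Lagrange-multiplier reformulation of the constraint $B\tp=0$). Your additional care about the degree-of-freedom bookkeeping across $\Gamma_{st}$ and about the existence of a multiplier $\lambda^*$ (via $\0\in\tC\cap\ker B$ and the affine constraint) only fills in details the paper leaves implicit.
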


Since the functional $\tJ (\tp)$ in \cref{tJ} is convex but not uniformly convex, the $O(1/n)$-primal-dual algorithm can be utilized to solve~\cref{pd_DD}~\cite{CP:2011}.
To estimate a valid range of parameters for the primal-dual algorithm, \cref{Lem:pd_DD_B_norm} gives a norm bound of the operator $B:\tY \rightarrow \R^{|\I_{\Gamma}|}$.

\begin{lemma}
\label{Lem:pd_DD_B_norm}
The operator norm of $B$\emph{:} $\tY \rightarrow \R^{|\I_{\Gamma}|}$ defined in~\cref{B} has a bound such that $\| B \|_{\tY \rightarrow \R^{|\I_{\Gamma}|}}^2 \leq 2$.
\end{lemma}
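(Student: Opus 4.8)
The plan is to estimate $\|B\tp\|^2_{\R^{|\I_\Gamma|}}$ directly from the definition \cref{B} and compare it with $\|\tp\|^2_{\tY} = \sum_s \|\tp_s\|^2_{\tY_s}$. Fix $\tp = \bigoplus_{s=1}^\N \tp_s \in \tY$. By definition, $B\tp$ is a vector indexed by the interface degrees of freedom $i \in \I_\Gamma$; for such an $i$ lying on $\Gamma_{st}$ with $s<t$, the corresponding entry is $(\tp_s \cdot \n_{st})_i - (\tp_t \cdot \n_{st})_i$, i.e. the difference of the two local degrees of freedom of $\tp_s$ and $\tp_t$ sitting on that same edge. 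So $(B\tp)_i^2 = ((\tp_s)_{j} - (\tp_t)_{k})^2 \leq 2\,(\tp_s)_j^2 + 2\,(\tp_t)_k^2$ by the elementary inequality $(a-b)^2 \le 2a^2 + 2b^2$, where $j \in \tilde\I_s$ and $k \in \tilde\I_t$ are the indices of the local basis functions associated with that edge.

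Next I would sum over all $i \in \I_\Gamma$. The key combinatorial observation is that each interface edge belongs to exactly two subdomains, so each local interface degree of freedom $(\tp_s)_j$ (with $j$ an index of $\tY_s$ sitting on $\partial\Omega_s \cap \Gamma$) appears in exactly one term $(B\tp)_i^2$. Hence
\begin{equation*}
\|B\tp\|_{\R^{|\I_\Gamma|}}^2 = \sum_{i \in \I_\Gamma} (B\tp)_i^2 \le 2 \sum_{s=1}^{\N} \sum_{\substack{j \in \tilde\I_s \\ j \text{ on } \Gamma}} (\tp_s)_j^2 \le 2 \sum_{s=1}^{\N} \sum_{j \in \tilde\I_s} (\tp_s)_j^2 = 2 \sum_{s=1}^{\N} \|\tp_s\|_{\tY_s}^2 = 2 \|\tp\|_{\tY}^2,
\end{equation*}
which gives $\|B\|_{\tY \rightarrow \R^{|\I_\Gamma|}}^2 \le 2$.

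The main thing to get right is the bookkeeping in the middle step: one must be sure that after expanding $(B\tp)_i^2 \le 2(\tp_s)_j^2 + 2(\tp_t)_k^2$ and summing over $i$, no local degree of freedom is counted more than once, which is exactly where the "each interface edge is shared by precisely two subdomains" fact enters. Everything else is the elementary inequality $(a-b)^2 \le 2a^2 + 2b^2$ and the definition of the norms on $\tY_s$ and $\R^{|\I_\Gamma|}$ as Euclidean norms. No deeper obstacle is anticipated; this is the $B$-analogue of the $\div$ norm estimates in \cref{Prop:div_norm} and \cref{Lem:primal_DD_div_norm}.
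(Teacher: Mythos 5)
Your proposal is correct and follows essentially the same route as the paper's proof: write each entry $(B\tp)_i$ as the difference of the two adjacent local degrees of freedom, bound its square by $2\bigl((\tp_s)_j^2+(\tp_t)_k^2\bigr)$, and sum, using the fact that each local interface degree of freedom is counted once. Your extra care with the bookkeeping in the summation step is a welcome clarification of what the paper leaves implicit.
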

\begin{proof}
Fix $\tp = \bigoplus_{s=1}^{\N} \tp_s \in \tY$.
Let $(B\tp)_i$ be a degree of freedom of $B\tp$ on $\Gamma_{st}$ for some $s<t$,
and let $(\tp_s)_i$, $(\tp_t)_i$ be degrees of freedom of $\tp_s$, $\tp_t$ adjacent to $(B\tp)_i$, respectively.
Then it satisfies that
$$ (B\tp)_i = (\tp_s)_i - (\tp_t)_i .$$
By applying the Cauchy--Schwarz inequality, we get
$$ (B\tp)_i^2 \leq 2((\tp_s)_i^2 + (\tp_t)_i^2).$$
Summation over every $i$ and $s<t$ yields $\| B\tp \|_{\R^{|\I_{\Gamma}|}}^2 \leq 2 \| \tp \|_{\tY}^2$.
\end{proof}

Thanks to \cref{Lem:pd_DD_B_norm}, the primal-dual algorithm for~\cref{pd_DD} is given in \cref{Alg:pd_DD}.
We notice that the primal-dual algorithm was used for DDMs in~\cite{DCT:2016}.

\begin{algorithm}[]
\caption{Primal-dual DDM}
\begin{algorithmic}[]
\label{Alg:pd_DD}
\STATE Choose $L \geq 2$, $\tau, \sigma > 0$ with $\tau \sigma = \frac{1}{L}$.
Let $\tp^{(0)} = \0$ and $\lambda^{(0)} = 0$.
\FOR{$n=0,1,2,...$}
\STATE $\displaystyle \lambda^{(n+1)} = \lambda^{(n)} + \sigma B (2\tp^{(n)} - \tp^{(n-1)} )$
\STATE $\displaystyle \tp^{(n+1)} \in \argmin_{\tp \in \tY} \left\{  \tJ (\tp) + \chi_{\tC}(\tp) + \frac{1}{2\tau} \intO (\tp - \hat{\p})^2 \,dx \right\}$,\\
\quad where $\displaystyle \hat{\p} = \tp^{(n)} - \tau B^* \lambda^{(n+1)}$
\ENDFOR
\end{algorithmic}
\end{algorithm}

We note that the primal problem for $\tp^{(n+1)}$ in \cref{Alg:pd_DD} can be solved independently in each subdomain.
Indeed, $\tp^{(n+1)}$ can be obtained as the direct sum of $\tp_s^{(n+1)}$'s, where $\tp_s^{(n+1)}$ is a solution of
\begin{equation}
\label{pd_DD_local}
\min_{\tp_s \in \tY_s} \left\{ \frac{1}{2\alpha} \intOs (\div \tp_s + \alpha f)^2 \,dx + \chi_{\tC_s}(\tp_s) + \frac{1}{2\tau} \intOs (\tp_s - \hat{\p}_s)^2 \,dx \right\},
\end{equation}
where $\hat{\p}_s = \tilde{\p}_s^{(n)} - \tau B^* \lambda^{(n+1)} |_{\Omega_s}$.
Now, we state the convergence analysis for \cref{Alg:pd_DD}.
See Theorem~5.1 of~\cite{CP:2016} for details.

\begin{theorem}
\label{Thm:pd_DD}
Let $\left\{ \tp^{(n)}, \lambda^{(n)} \right\}$ be the sequence generated by \cref{Alg:pd_DD}.
Then, it converges to a saddle point of~\cref{pd_DD} and satisfies that
$$
\L \left( \frac{1}{n}\sum_{k=1}^{n}\tp^{(k)}, \lambda \right) - 
 \L \left( \tp, \frac{1}{n}\sum_{k=1}^{n}\lambda^{(k)} \right)
 \leq \frac{1}{n} \left( \frac{1}{\tau} \| \tp - \tp^{(0)} \|_{2, \tY}^2 + \frac{1}{\sigma} \| \lambda - \lambda^{(0)} \|_{2, \mathbb{R}^{|\I_{\Gamma}|}}^2 \right)
$$
for any $\tp \in Y$ and $\lambda \in \mathbb{R}^{|\I_{\Gamma}|}$.
\end{theorem}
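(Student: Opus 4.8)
The plan is to recognize \cref{Alg:pd_DD} as an instance of the Chambolle--Pock primal-dual algorithm applied to the saddle point problem \cref{pd_DD}, and then invoke the general convergence theory (Theorem~5.1 of \cite{CP:2016}) verbatim. First I would cast \cref{pd_DD} in the standard primal-dual form $\min_{\tp}\max_{\lambda}\, G(\tp) + \langle B\tp,\lambda\rangle - H^*(\lambda)$ by setting $G(\tp) = \tJ(\tp) + \chi_{\tC}(\tp)$ and $H^* \equiv 0$ (equivalently $H = \chi_{\{0\}}$). One checks that $G$ is proper, convex, and lower semicontinuous: $\tJ$ is a finite convex quadratic and $\chi_{\tC}$ is the indicator of the nonempty compact convex set $\tC$. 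The linear operator is $B:\tY\to\R^{|\I_{\Gamma}|}$ from \cref{B}. The $\tp$-update in \cref{Alg:pd_DD} is exactly the proximal step $\tp^{(n+1)} = \operatorname{prox}_{\tau G}(\tp^{(n)} - \tau B^*\lambda^{(n+1)})$, since minimizing $\tJ(\tp)+\chi_{\tC}(\tp) + \tfrac{1}{2\tau}\|\tp - \hat{\p}\|_{2,\tY}^2$ is the definition of that proximal operator; and the $\lambda$-update with the over-relaxed argument $2\tp^{(n)}-\tp^{(n-1)}$ is the proximal ascent step for $H^* = 0$, which collapses to the stated explicit formula.

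Next I would verify the step-size condition required by \cite{CP:2016}, namely $\tau\sigma \|B\|^2 < 1$. By \cref{Lem:pd_DD_B_norm} we have $\|B\|_{\tY\to\R^{|\I_{\Gamma}|}}^2 \le 2 \le L$, and the algorithm chooses $\tau,\sigma>0$ with $\tau\sigma = 1/L$, so $\tau\sigma\|B\|^2 \le 2/L \le 1$; with $L\ge 2$ strictly this gives the nonexpansiveness needed for the iteration, and the boundary case $L=2$ is still covered by the ergodic estimate. With these identifications, Theorem~5.1 of \cite{CP:2016} applies directly: the sequence $\{\tp^{(n)},\lambda^{(n)}\}$ converges to a saddle point of \cref{pd_DD} (which exists by \cref{Prop:pd_DD_equiv}, since \cref{d_dual_ROF} has a solution and $\Phi\p^*$ together with an associated multiplier furnishes one), and the ergodic averages satisfy the partial primal-dual gap bound stated in the theorem, with the constant $\tfrac{1}{\tau}\|\tp-\tp^{(0)}\|_{2,\tY}^2 + \tfrac{1}{\sigma}\|\lambda-\lambda^{(0)}\|_{2,\R^{|\I_{\Gamma}|}}^2$ coming from the Bregman-distance initialization terms in that reference.

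The only genuinely substantive point — and the one I would present as the main obstacle — is confirming that the $\tp$-update is well posed, i.e.\ that $\argmin_{\tp\in\tY}\{\tJ(\tp)+\chi_{\tC}(\tp)+\tfrac{1}{2\tau}\|\tp-\hat{\p}\|_{2,\tY}^2\}$ is nonempty (and that its relevant output, through $\div$, is uniquely determined). This follows because the objective is a sum of a convex quadratic, a lower semicontinuous convex indicator of a nonempty compact set $\tC$, and a strongly convex quadratic penalty; the penalty term alone makes the objective coercive and strictly convex on $\tY$, so the minimizer is in fact unique. Everything else is bookkeeping: matching the notation $\|\cdot\|_{2,\tY}$ and $\|\cdot\|_{2,\R^{|\I_{\Gamma}|}}$ to the norms used in \cite{CP:2016}, and noting that restricting $\tp$ to $Y$ in the bound is harmless since $Y\cong\ker B\subset\tY$ via $\Phi$. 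I would therefore keep the proof short, stating that the result is a direct application of Theorem~5.1 of \cite{CP:2016} once the correspondence above and the bound $\tau\sigma\|B\|^2\le 1$ from \cref{Lem:pd_DD_B_norm} are in place.
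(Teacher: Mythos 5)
Your proposal is correct and takes essentially the same route as the paper: the paper's entire proof is a citation of Theorem~5.1 of~\cite{CP:2016}, i.e.\ it treats \cref{Alg:pd_DD} as the Chambolle--Pock primal-dual algorithm with $G=\tJ+\chi_{\tC}$, $H^*\equiv 0$, and the step-size condition $\tau\sigma\|B\|^2\le 1$ supplied by \cref{Lem:pd_DD_B_norm}. Your additional checks (well-posedness of the proximal step, existence of a saddle point via \cref{Prop:pd_DD_equiv}) only make explicit what the paper leaves implicit.
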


Even though the convergence rate in \cref{Thm:pd_DD} is the same as the existing methods (see, e.g.,~\cite{CTWY:2015}),
the proposed primal-dual DDM has an advantage for the convergence rate of local problems compared to the existing ones.
With the help of a $\frac{1}{\tau}$-uniformly convex term
$$\frac{1}{2\tau} \intOs (\tp_s - \hat{\p}_s)^2 \,dx$$
in~\cref{pd_DD_local}, linearly convergent algorithms such as~\cite[Algorithm~3]{CP:2011} and~\cite[Algorithm~5]{CP:2016} can be adopted, while the known optimal convergence rate of the existing methods for the ROF model is only $O(1/n^2)$, which is far slower than linear convergence.
The following is the linearly convergent primal-dual algorithm~\cite[Algorithm~3]{CP:2011} applied to~\cref{pd_DD_local}.

\begin{algorithm}[]
\renewcommand{\thealgorithm}{}
\caption{Linearly convergent local solver for \cref{Alg:pd_DD}}
\begin{algorithmic}[]
\label{Alg:pd_DD_local}
\STATE Choose $L \geq 8$, $\gamma \leq \alpha$, and $\delta \leq \frac{1}{\tau}$.
\STATE Set $\mu = \frac{2\sqrt{\gamma \delta}}{L}$, $\tau_0 = \frac{\mu}{2\gamma}$, $\sigma_0 = \frac{\mu}{2\delta}$, and $\theta_0 \in \left[ \frac{1}{1+\mu}, 1\right]$.
Let $\bar{u}_s^{(0)} = u_s^{(0)} = 0$ and $\tp_s^{(0)} = \0$.
\FOR{$n=0,1,2,...$}
\STATE $\displaystyle \tp_s^{(n+1)} = \proj_{\tC_s} \left( \frac{\tau (\tp_s^{(n)} - \sigma_0 \div^* \bar{u}_s^{(n)}) + \sigma_0 \hat{\p}_s}{\tau + \sigma_0}\right)$
\STATE $\displaystyle u_s^{(n+1)} = \frac{(u_s^{(n)} + \tau_0 \div \p_s^{(n+1)}) + \tau_0 \alpha f}{1 + \tau_0 \alpha}$
\STATE $\bar{u}_s^{(n+1)} = u_s^{(n+1)} + \theta_0 (u_s^{(n+1)} - u_s^{(n)})$
\ENDFOR
\end{algorithmic}
\end{algorithm}

\section{Numerical Results}
\label{Sec:numerical}
In this section, numerical results of the algorithms introduced in previous sections are presented.
All the algorithms were implemented in MATLAB~R2018a,
and all the computations were performed on a desktop equipped with Intel Core i5-8600K CPU (3.60GHz), 16GB memory, and the OS Windows 10 Pro 64-bit.
Two test images ``Peppers $512\times512$" and ``Boat $2048 \times 3072$," shown in \cref{Fig:test_images}, were used in the numerical experiments.
We introduced noise to each image using Gaussian additive noise with mean $0$ and variance $0.05$.
As a measurement of the quality of denoising, the peak-signal-to-noise ratio (PSNR) defined by
\begin{equation*}
\mathrm{PSNR} = 10 \log_{10} \left( \frac{\mathrm{MAX}^2 \cdot |\Omega|}{\| u-f_{\mathrm{orig}}\|_{X}^2} \right),
\end{equation*}
where $\mathrm{MAX}$ is the maximum possible pixel value of the image ($\mathrm{MAX} = 1$ in our experiments), $f_{\mathrm{orig}}$ is the original clean image and $u$ is a denoised image, is calculated for each output of the experiment.
We set~$\alpha = 10$ heuristically in~\cref{ROF}.

\begin{figure}[]
\centering
\subfloat[][Peppers $512 \times 512$]{ \includegraphics[height=3.8cm]{./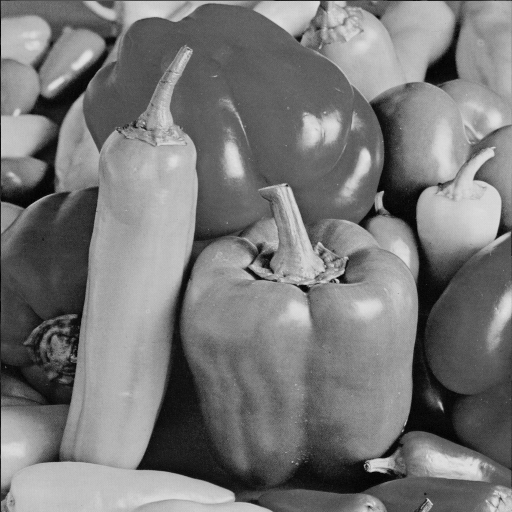} }
\hspace{1.2cm}
\subfloat[][Boat $2048 \times 3072$]{ \includegraphics[height=3.8cm]{./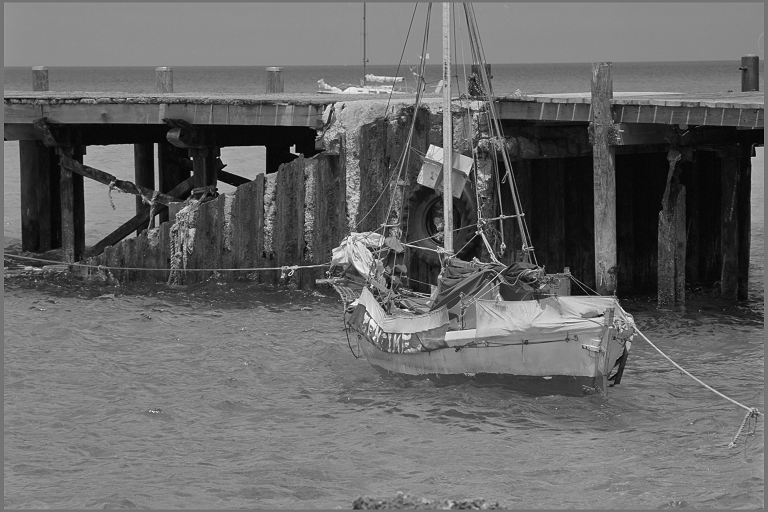} }
\caption{Test images for the numerical experiments}
\label{Fig:test_images}
\end{figure}

First, we compare the proposed methods with other existing DDMs for the ROF model.
Thanks to~\cref{Thm:equiv}, direct comparisons with existing methods based on the finite difference discretization are available in the aspect of the primal energy functional defined as
\begin{equation}
\label{primal_energy}
\E (u) = \frac{\alpha}{2} \| u - f \|_2^2 + \| | Du|_1 \|_1.
\end{equation}
The following algorithms are used for our numerical experiments:
\begin{itemize}
\item ALG1: Primal DDM described in \cref{Alg:primal_DD}, $L=4$.
\item ALG2: Primal-dual DDM described in \cref{Alg:pd_DD}, $L=2$, $\sigma=0.02$, $\sigma \tau = 1/L$.
\item HL--RJ: Relaxed block Jacobi~(parallel) method proposed by Hinterm{\"u}ller and Langer~\cite{HL:2015}, relaxation parameter:~$1/3$ (see Remark~3.3 of~\cite{LN:2017}).
\item HL--GS: Block Gauss--Seidel~(successive) method proposed by Hinterm{\"u}ller and Langer~\cite{HL:2015}.
\item LN--RJ: Relaxed block Jacobi method proposed by Lee and Nam~\cite{LN:2017}, relaxation parameter:~$1/3$.
\item LN--GS: Block Gauss--Seidel method proposed by Lee and Nam~\cite{LN:2017}.
\end{itemize}
The number of subdomains~$\N$ is fixed at~$4\times4$.
Local problems are solved by the~$O(1/n^2)$ convergent primal-dual algorithm~\cite[Algorithm~2]{CP:2011} with the parameters $L=8$, $\gamma = 0.125\alpha$, $\tau_0 = 0.01$, and $\sigma_0 \tau_0 = 1/L$ for all algorithms stated above but ALG2.
For ALG2, the linearly convergent primal-dual algorithm~\cite[Algorithm~3]{CP:2011} with the parameters~$L=8$, $\gamma = 0.5\alpha$, and $\delta = 1/\tau$ are used.
Local problems are solved by the following stop criterion:
\begin{equation*}
\frac{\| \p_s^{(n+1)} - \p_s^{(n)} \|_2}{\| \p_s^{(n+1)}\|_2} < 10^{-8}.
\end{equation*}

To evaluate the performances of DDMs based on iterations of the dual variables~$\left\{\p^{(n)} \right\}$ in terms of the primal energy~\cref{primal_energy}, we have to define the primal iterates~$\left\{u^{(n)} \right\}$ appropriately.
For HL--RJ and HL--GS, we define $u^{(n)}$ as
\begin{equation*}
u^{(n)} = f + \frac{1}{\alpha} \div \p^{(n)}.
\end{equation*}
Also, for ALG1 and ALG2, $u^{(n)}$ is defined as
\begin{equation}
\label{primal_DD_u}
u^{(n)} = f + \frac{1}{\alpha} \div (\Hq^{(n)} \oplus \q_{\Gamma}^{(n)})
\end{equation}
and
\begin{equation}
\label{pd_DD_u}
u^{(n)} = f + \frac{1}{\alpha} \bigoplus_{s=1}^{\N} \div \tp_s^{(n)} ,
\end{equation}
respectively.
Meanwhile, we compute the minimum value of the primal energy~$\E (u^*)$ approximately by 10,000 iterations of the~$O(1/n^2)$ convergent primal-dual algorithm applied to the full dimension problem~\cref{d_dual_ROF}.

\begin{figure}[]
\centering
\subfloat[][Peppers $512 \times 512$]{ \includegraphics[height=4.9cm]{./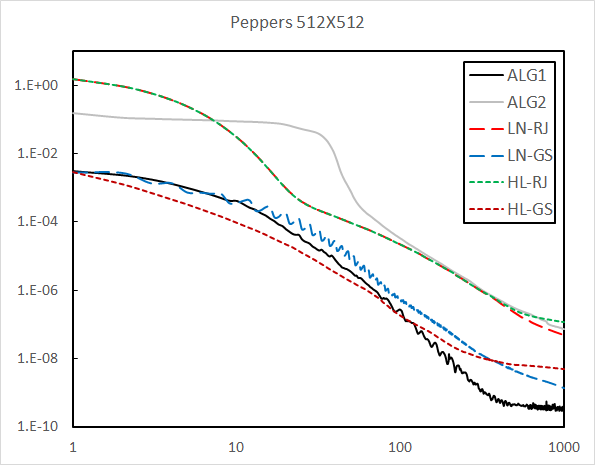} }
\subfloat[][Boat $2048 \times 3072$]{ \includegraphics[height=4.9cm]{./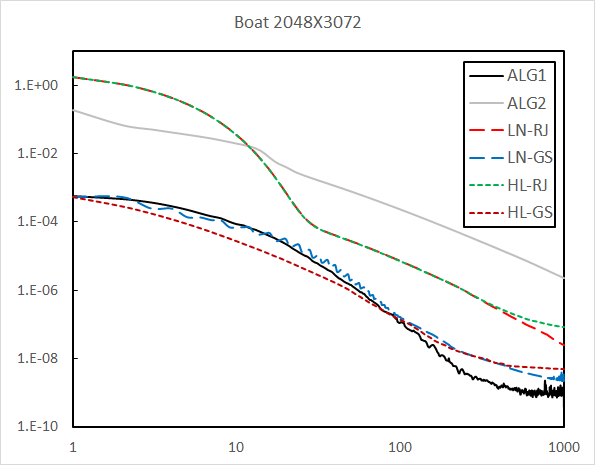} }
\caption{Decay of the values of~$\frac{\E (u^{(n)}) - \E(u^*) }{\E (u^*)}$ in various DDMs for the ROF model}
\label{Fig:comp}
\end{figure}

\Cref{Fig:comp} shows the decay of the relative primal energy functional~$\frac{\E (u^{(n)}) - \E(u^*) }{\E (u^*)}$ during 1,000 outer iterations for various DDMs.
It can be observed that the primal energy of ALG1 decreases as fast as the block Gauss--Seidel methods.
ALG1 has an advantage compared to the block Gauss--Seidel methods in the aspect of parallel computation; all local problems of ALG1 can be solved in parallel while only local problems of the same color can be solved in parallel for the block Gauss--Seidel methods.
In~\cref{Fig:comp}, there are oscillations of the primal energy of ALG1 when the value of~$\frac{\E (u^{(n)}) - \E(u^*) }{\E (u^*)}$ is close to~$10^{-10}$.
This is because local problems are solved inexactly by iterative methods.

\begin{table}[]
\centering
\begin{tabular}{| c | c c c c c c |} \hline
Test image & ALG1 & ALG2 & HL--RJ & HL--GS & LN--RJ & LN--GS \\
\hline
Peppers
& 2923 & 271 &  2925 & 2928 & 2925 & 2929 \\ \cline{2-6}
\hline
Boat
& 8613 & 263 & 8613  & 8613 & 8613 & 8614 \\ \cline{2-6}
\hline
\end{tabular}
\caption{Maximum numbers of inner iterations in various DDMs for the ROF model}
\label{Table:max_inner_iter}
\end{table}

Even though the primal energy of ALG2 does not decrease faster than the existing methods, it has its own advantage in that local problems can be solved much faster.
\Cref{Table:max_inner_iter} shows the maximum numbers of inner iterations during~1,000 outer iterations for various DDMs.
ALG1 shows similar behavior on inner iterations compared to the existing DDMs.
On the other hand, as we explained in~\cref{Sec:pd_DD}, ALG2 can adopt linearly convergent algorithms as local solvers, while the other algorithms cannot.
Thus, the maximum number of inner iterations of ALG2 is much less than the other ones.
This phenomenon makes ALG2 practically efficient.
For example, in the case of the test image ``Boat $2048\times3072$,'' a single outer iteration of ALG2 is approximately 32 times faster than the other methods.

Next, we present numerical results for the proposed methods, which emphasize their efficiency as parallel solvers.
To evaluate the parallel efficiency, the \textit{virtual wall-clock time} is measured, which assumes that the algorithms run in parallel in each subdomain.
That is, it ignores the communication time among processors.

We first present the numerical results for \cref{Alg:primal_DD}.
We set the parameter $L = 4$.
We note that, in the viewpoint of image restoration, the stop criteria for the proposed methods need not to be too strict.
We use the following stop criterion:
\begin{equation}
\label{stop_outer}
\left| \frac{\E(u^{(n+1)}) - \E(u^{(n)})}{\E(u^{(n+1)})} \right| < 10^{-3} ,
\end{equation}
where $u^{(n)}$ was defined in~\cref{primal_DD_u}.
Local problems are solved by the $O(1/n^2)$ convergent primal-dual algorithm with the parameters $L=8$, $\gamma = 0.125\alpha$, $\tau_0 = 0.01$, and $\sigma_0 \tau_0 = 1/L$ and the stop criterion
\begin{equation}
\label{stop_inner}
\frac{\| \p_{s}^{(n+1)} - \p_{s}^{(n)} \|_{Y_{s}}}{\| \p_{s}^{(n+1)} \|_{Y_{s}}} < 10^{-5} .
\end{equation}

\begin{table}[]
\centering
\begin{tabular}{| c | c | c c c c |} \hline
Test image & $\N$ & PSNR & iter & \begin{tabular}{c}max\\inner iter\end{tabular} & \begin{tabular}{c}Virtual\\wall-clock\\time (sec)\end{tabular} \\
\hline
\multirow{4}{*}{\shortstack{\begin{phantom}1\end{phantom} \\ \begin{phantom}2\end{phantom} \\ Peppers \\ $512 \times 512$}}
& 1 & 24.41 & - & 526 & 4.90 \\ \cline{2-6}
& $2 \times 2$ & 24.41 & 2 & 532 & 0.77 \\
& $4 \times 4$ & 24.41 & 2 & 584 & 0.26 \\
& $8 \times 8$ & 24.41 & 5 & 590 & 0.22 \\
& $16 \times 16$ & 24.41 & 7 & 573 & 0.14 \\
\hline
\multirow{4}{*}{\shortstack{\begin{phantom}1\end{phantom} \\ \begin{phantom}2\end{phantom} \\ Boat \\ $2048 \times 3072$}}
& 1 & 24.75 & - & 995 & 273.48 \\ \cline{2-6}
& $2 \times 2$ & 24.75 & 2 & 1145 & 91.72 \\
& $4 \times 4$ & 24.75 & 2 & 1408 & 21.03 \\
& $8 \times 8$ & 24.75 & 2 & 1415 & 3.42 \\
& $16 \times 16$ & 24.75 & 2 & 1492 & 1.31 \\
\hline
\end{tabular}
\caption{Performance of the primal DDM \cref{Alg:primal_DD}}
\label{Table:primal_DD}
\end{table}

\cref{Table:primal_DD} shows the performance of \cref{Alg:primal_DD}.
For the single subdomain case, the $O(1/n^2)$ convergent primal-dual algorithm is used.
The PSNRs of the resulting denoised images do not differ from the single subdomain case.
Thus, we can conclude that the results of \cref{Alg:primal_DD} agree with the single subdomain case, as proven in \cref{Prop:primal_DD_equiv}.
With sufficiently many subdomains, the virtual wall-clock time is much less than the wall-clock time of the single subdomain case.
It shows the worth of \cref{Alg:primal_DD} as a parallel algorithm.

Next, we consider the primal-dual DDM.
For \cref{Alg:pd_DD}, we set the parameters $L=2$, $\sigma = 0.02$, and $\sigma\tau = 1/L$.
We use the same stop criterion~\cref{stop_outer} for the outer iterations as in~\cref{Alg:primal_DD} with~$u^{(n)}$ defined in~\cref{pd_DD_u}.
For the local solver, the parameters $L=8$, $\gamma = 0.5\alpha$, and $\delta = 1/\tau$ are used.
The stop criterion~\cref{stop_inner} for local problems is used for~$\tilde{\p}_s^{(n)}$.

\begin{table}[]
\centering
\begin{tabular}{| c | c | c c c c |} \hline
Test image & $\N$ & PSNR & iter & \begin{tabular}{c}max\\inner iter\end{tabular} & \begin{tabular}{c}Virtual\\wall-clock\\time (sec)\end{tabular} \\
\hline
\multirow{4}{*}{\shortstack{\begin{phantom}1\end{phantom} \\ \begin{phantom}2\end{phantom} \\ Peppers \\ $512 \times 512$}}
& 1 & 24.41 & - & 526 & 4.90 \\ \cline{2-6}
& $2 \times 2$ & 24.41 & 22 & 144 & 2.09 \\
& $4 \times 4$ & 24.41 & 24 & 147 & 0.66 \\
& $8 \times 8$ & 24.41 & 26 & 150 & 0.28 \\
& $16 \times 16$ & 24.41 & 30 & 154 & 0.19 \\
\hline
\multirow{4}{*}{\shortstack{\begin{phantom}1\end{phantom} \\ \begin{phantom}2\end{phantom} \\ Boat \\ $2048 \times 3072$}}
& 1 & 24.75 & - & 995 & 273.48 \\ \cline{2-6}
& $2 \times 2$ & 24.75 & 12 & 138 & 95.84 \\
& $4 \times 4$ & 24.75 & 18 & 140 & 24.59 \\
& $8 \times 8$ & 24.75 & 20 & 144 & 3.38 \\
& $16 \times 16$ & 24.75 & 24 & 146 & 1.74 \\
\hline
\end{tabular}
\caption{Performance of the primal-dual DDM \cref{Alg:pd_DD}}
\label{Table:pd_DD}
\end{table}

As \cref{Table:pd_DD} shows, the solution of \cref{Alg:pd_DD} is consistent with the single subdomain case regardless of the number of subdomains.
Since the local solver has the linear convergence rate, which is much faster than the standard algorithms for the ROF model,
we can observe that the maximum number of inner iterations of \cref{Alg:pd_DD} is smaller than that of \cref{Alg:primal_DD} in all cases.
For example, in the experiments with the test image ``Boat $2048 \times 3072$,'' local problems of \cref{Alg:pd_DD} are solved approximately 10 times faster than those of \cref{Alg:primal_DD}.
Consequently, even though the convergence rate of \cref{Alg:pd_DD} is only~$O(1/n)$, the virtual wall-clock time of \cref{Alg:pd_DD} is as small as that of~\cref{Alg:primal_DD} in the case of sufficiently many subdomains.

\begin{figure}[]
\centering
\subfloat[][Noisy ``Peppers $512\times512$'' \\ \centering(PSNR: 19.11)]{ \includegraphics[width=3.8cm]{./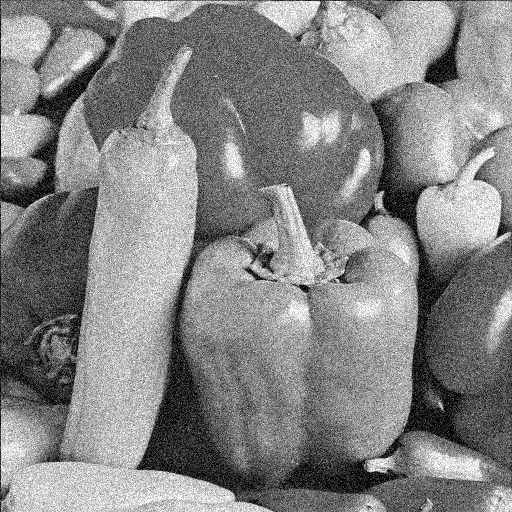} }
\subfloat[][Primal DDM, $\N = 16\times16$ \\ \centering(PSNR: 24.41)]{ \includegraphics[width=3.8cm]{./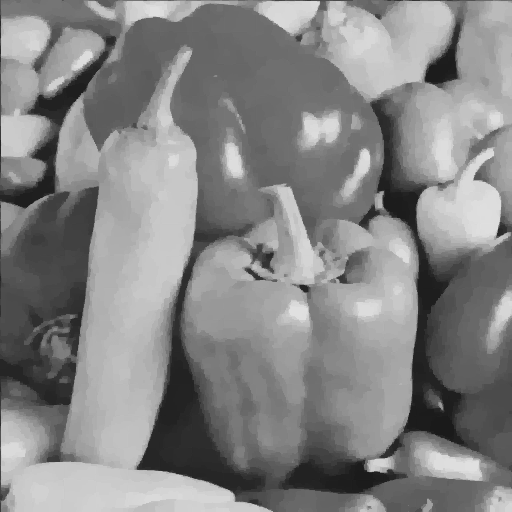} }
\subfloat[][Primal-dual DDM,\\ \centering$\N = 16\times16$ (PSNR: 24.41)]{ \includegraphics[width=3.8cm]{./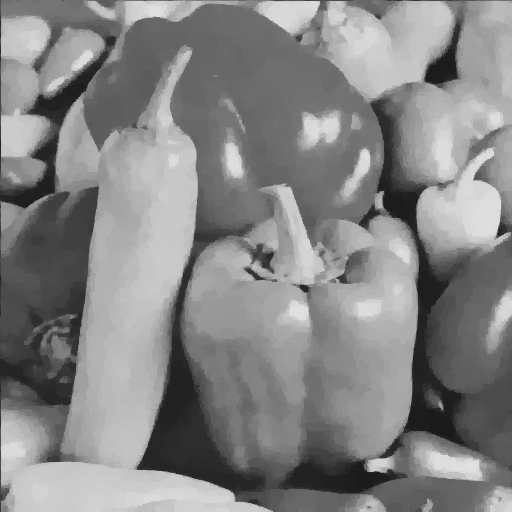} } 

\subfloat[][Noisy ``Boat $2048\times3072$'' \\ \centering(PSNR: 19.10)]{ \includegraphics[width=3.8cm]{./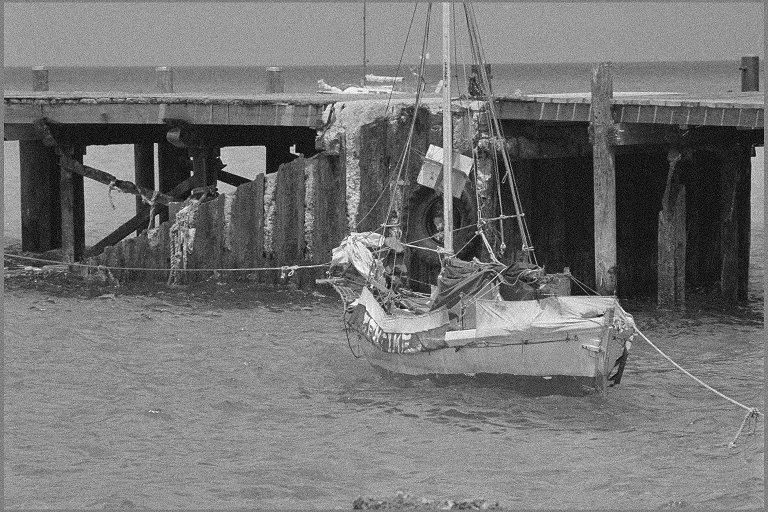} }
\subfloat[][Primal DDM, $\N = 16\times16$ \\ \centering(PSNR: 24.75)]{ \includegraphics[width=3.8cm]{./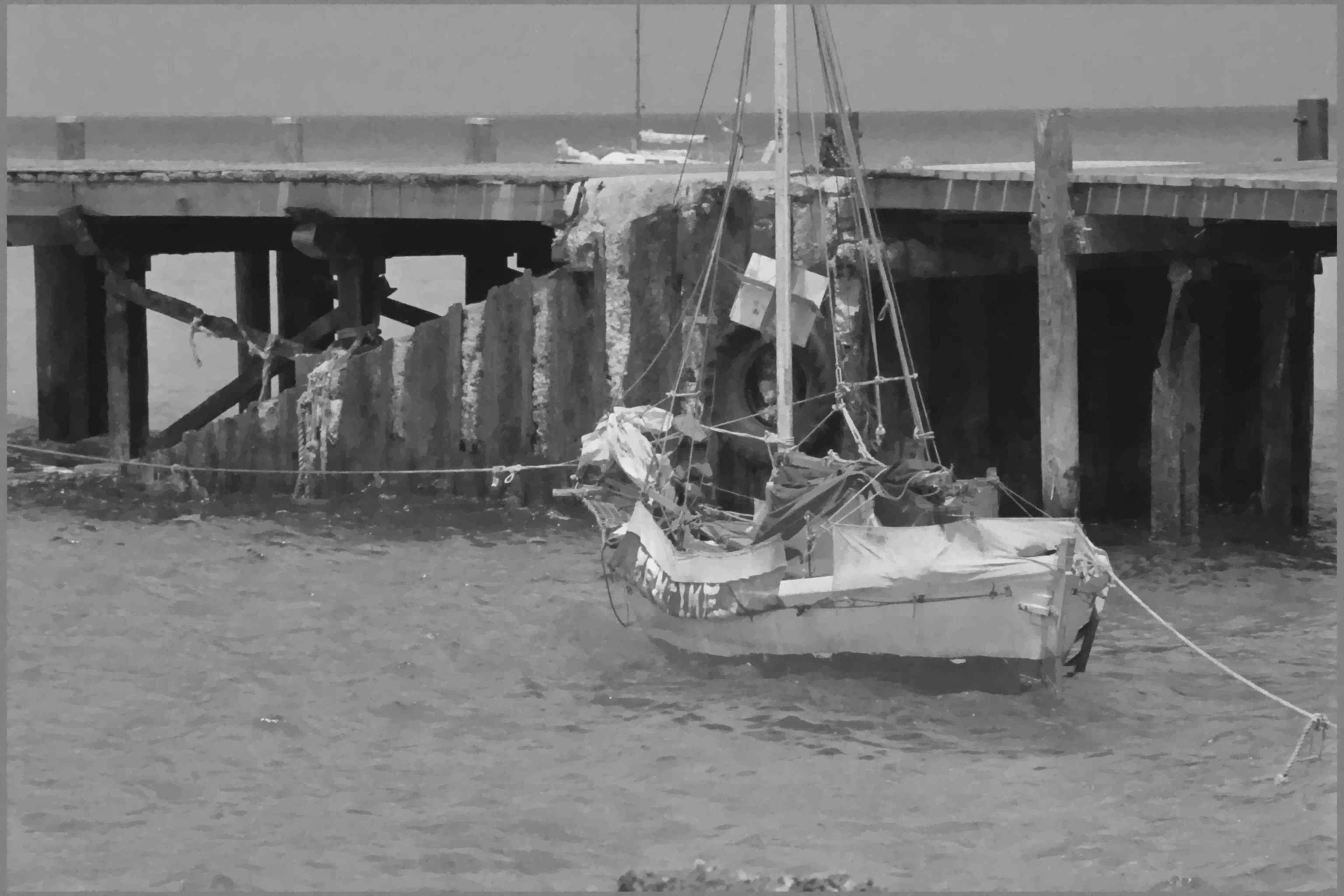} }
\subfloat[][Primal-dual DDM,\\ \centering$\N = 16\times16$ (PSNR: 24.75)]{ \includegraphics[width=3.8cm]{./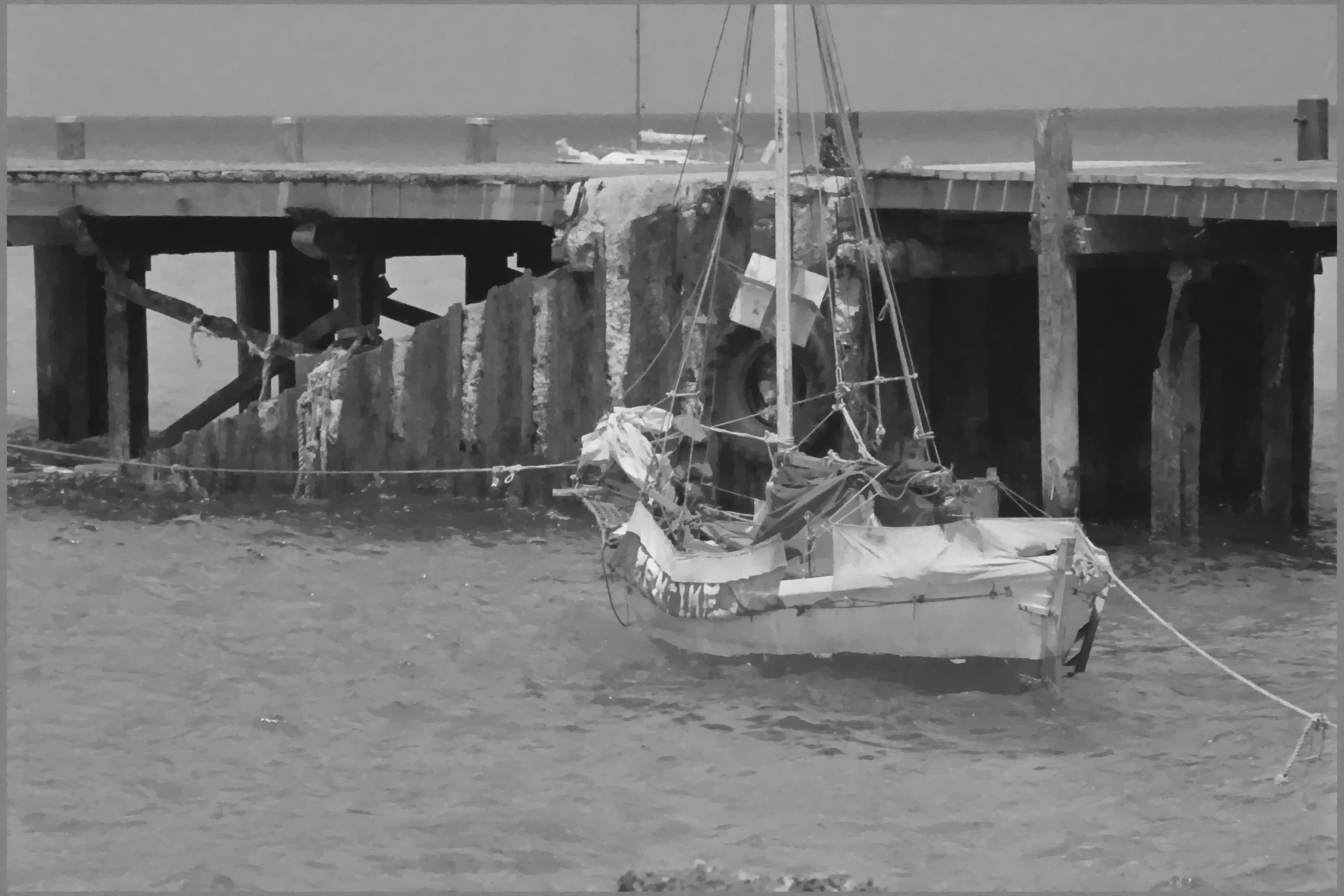} } 

\caption{Results of \cref{Alg:primal_DD,Alg:pd_DD} for test images}
\label{Fig:denoised_images}
\end{figure}

Finally, we display the resulting denoised images by the proposed DDMs in \cref{Fig:denoised_images}.
We only provide the images for the case $\N = 16 \times 16$ since all the resulting images are visually the same regardless of the number of subdomains.
One can observe that there are no artificialities at all on the subdomain interfaces even in the case of quite large number of subdomains.

\section{Conclusion}
\label{Sec:conclusion}
In this paper, we proposed an alternative discretization~\cref{d_dual_ROF} for the dual ROF model using a conforming Raviart--Thomas basis.
We mentioned that the proposed discretization naturally satisfies the splitting property~\cref{splitting} of the energy functional.
Thanks to the splitting property, we proposed two DDMs for the dual ROF model: the primal one and the primal-dual one.
We showed that the proposed primal DDM has a $O(1/n^2)$ convergence rate, which is the best among the existing DDMs.
Also, we showed that the local problems in the proposed primal-dual DDM can be solved at a linear convergence rate by using the accelerated primal-dual algorithm.
Numerical results demonstrate the superiority of the proposed DDMs.

We conclude the paper with a remark on the primal-dual DDM.
 Since we did not use any regularity of the dual ROF energy functional to prove convergence of the primal-dual DDM, we expect that the primal-dual DDM can be generalized to more advanced imaging problems with total variation, for example, total variation minimization with~$L^1$-fidelity term~\cite{CE:2005}.
 
\bibliographystyle{siamplain}
\bibliography{references}

\begin{thebibliography}{10}

\bibitem{BT:2009}
{\sc A.~Beck and M.~Teboulle}, {\em A fast iterative shrinkage-thresholding
  algorithm for linear inverse problems}, SIAM J. Imaging Sci., 2 (2009),
  pp.~183--202.

\bibitem{BBF:2013}
{\sc D.~Boffi, F.~Brezzi, and M.~Fortin}, {\em Mixed Finite Element Methods and
  Applications}, vol.~44, Springer-Verlag, Berlin, 2013.

\bibitem{Chambolle:2004}
{\sc A.~Chambolle}, {\em An algorithm for total variation minimization and
  applications}, J. Math. Imaging Vision, 20 (2004), pp.~89--97.

\bibitem{CP:2011}
{\sc A.~Chambolle and T.~Pock}, {\em A first-order primal-dual algorithm for
  convex problems with applications to imaging}, J. Math. Imaging Vision, 40
  (2011), pp.~120--145.

\bibitem{CP:2016}
{\sc A.~Chambolle and T.~Pock}, {\em An introduction to continuous optimization
  for imaging}, Acta Numer., 25 (2016), pp.~161--319.

\bibitem{CE:2005}
{\sc T.~F. Chan and S.~Esedoglu}, {\em Aspects of total variation regularized
  ${L}^1$ function approximation}, SIAM J. Appl. Math., 65 (2005),
  pp.~1817--1837.

\bibitem{CTWY:2015}
{\sc H.~Chang, X.-C. Tai, L.-L. Wang, and D.~Yang}, {\em Convergence rate of
  overlapping domain decomposition methods for the {R}udin-{O}sher-{F}atemi
  model based on a dual formulation}, SIAM J. Imaging Sci., 8 (2015),
  pp.~564--591.

\bibitem{DCT:2016}
{\sc Y.~Duan, H.~Chang, and X.-C. Tai}, {\em Convergent non-overlapping domain
  decomposition methods for variational image segmentation}, J. Sci. Comput.,
  69 (2016), pp.~532--555.

\bibitem{FLP:2000}
{\sc C.~Farhat, M.~Lesoinne, and K.~Pierson}, {\em A scalable dual-primal
  domain decomposition method}, Numer. Linear Algebra Appl., 7 (2000),
  pp.~687--714.

\bibitem{FR:1991}
{\sc C.~Farhat and F.-X. Roux}, {\em A method of finite element tearing and
  interconnecting and its parallel solution algorithm}, Internat. J. Numer.
  Methods Engrg., 32 (1991), pp.~1205--1227.

\bibitem{FLS:2010}
{\sc M.~Fornasier, A.~Langer, and C.-B. Sch{\"o}nlieb}, {\em A convergent
  overlapping domain decomposition method for total variation minimization},
  Numer. Math., 116 (2010), pp.~645--685.

\bibitem{FS:2009}
{\sc M.~Fornasier and C.-B. Sch{\"o}nlieb}, {\em Subspace correction methods
  for total variation and $l1$-minimization}, SIAM J. Numer. Anal., 47 (2009),
  pp.~3397--3428.

\bibitem{GR:2012}
{\sc V.~Girault and P.-A. Raviart}, {\em Finite Element Methods for
  Navier-Stokes Equations: Theory and Algorithms}, vol.~5, Springer-Verlag,
  Berlin, 2012.

\bibitem{HL:2013}
{\sc M.~Hinterm{\"u}ller and A.~Langer}, {\em Subspace correction methods for a
  class of nonsmooth and nonadditive convex variational problems with mixed
  ${L}^1$/${L}^2$ data-fidelity in image processing}, SIAM J. Imaging Sci., 6
  (2013), pp.~2134--2173.

\bibitem{HL:2015}
{\sc M.~Hinterm{\"u}ller and A.~Langer}, {\em Non-overlapping domain
  decomposition methods for dual total variation based image denoising}, J.
  Sci. Comput., 62 (2015), pp.~456--481.

\bibitem{LLWY:2016}
{\sc C.-O. Lee, J.~H. Lee, H.~Woo, and S.~Yun}, {\em Block decomposition
  methods for total variation by primal--dual stitching}, J. Sci. Comput., 68
  (2016), pp.~273--302.

\bibitem{LN:2017}
{\sc C.-O. Lee and C.~Nam}, {\em Primal domain decomposition methods for the
  total variation minimization, based on dual decomposition}, SIAM J. Sci.
  Comput., 39 (2017), pp.~B403--B423.

\bibitem{LeVeque:2007}
{\sc R.~J. LeVeque}, {\em Finite Difference Methods for Ordinary and Partial
  Differential Equations: Steady-State and Time-Dependent Problems}, vol.~98,
  SIAM, Philadelphia, 2007.

\bibitem{Monk:2003}
{\sc P.~Monk}, {\em Finite Element Methods for Maxwell's Equations}, Oxford
  University Press, New York, 2003.

\bibitem{Nedelec:1980}
{\sc J.-C. N{\'e}d{\'e}lec}, {\em Mixed finite elements in $\mathbb{R}^3$},
  Numer. Math., 35 (1980), pp.~315--341.

\bibitem{Nesterov:2005}
{\sc Y.~Nesterov}, {\em Smooth minimization of non-smooth functions}, Math.
  Program., 103 (2005), pp.~127--152.

\bibitem{RT:1977}
{\sc P.-A. Raviart and J.-M. Thomas}, {\em A mixed finite element method for
  2-nd order elliptic problems}, in Mathematical aspects of finite element
  methods, Springer, 1977, pp.~292--315.

\bibitem{ROF:1992}
{\sc L.~I. Rudin, S.~Osher, and E.~Fatemi}, {\em Nonlinear total variation
  based noise removal algorithms}, Phys. D, 60 (1992), pp.~259--268.

\bibitem{SC:2003}
{\sc D.~Strong and T.~Chan}, {\em Edge-preserving and scale-dependent
  properties of total variation regularization}, Inverse Problems, 19 (2003),
  pp.~S165--S187.

\bibitem{XTW:2010}
{\sc J.~Xu, X.-C. Tai, and L.-L. Wang}, {\em A two-level domain decomposition
  method for image restoration}, Inverse Probl. Imaging, 4 (2010),
  pp.~523--545.

\end{thebibliography}
\end{document}